\documentclass[12pt,reqno]{amsart}
\usepackage{etoolbox}

\makeatletter
\patchcmd\maketitle
{\uppercasenonmath\shorttitle}
{}
{}{}
\patchcmd\maketitle
{\@nx\MakeUppercase{\the\toks@}}
{\the\toks@}
{}
{}{}
\patchcmd\@settitle{\uppercasenonmath\@title}{\Large}{}{}
\patchcmd\@setauthors
{\MakeUppercase{\authors}}
{\authors}
{}{}
\makeatother
\usepackage{amsmath,amssymb,amsthm, amsfonts}
\usepackage{color}
\usepackage{xcolor}
\usepackage{url}
\usepackage{tikz-cd}
\usepackage{combelow}
\usepackage{enumitem}
\input{mathrsfs.sty} 
\usepackage{euler}
\usepackage{palatino}
\usepackage[utf8]{inputenc}
\usepackage[T1]{fontenc}
\newtheorem{theorem}{Theorem}[section]
\newtheorem{question}{Question}[section]
 
\newtheorem{definition}{Definition}[section]

\newtheorem{corollary}{Corollary}[section]
\newtheorem{proposition}{Proposition}[section]
\newtheorem{lemma}{Lemma}[section]
\newtheorem{remark}{Remark}[section]
\newtheorem{example}{Example}[section]
\numberwithin{equation}{section}

\newcommand\norm[1]{\left\lVert#1\right\rVert}

\newcommand\skal[2]{\left\langle #1,#2\right\rangle}

\usepackage[colorlinks=true]{hyperref}
\hypersetup{urlcolor=blue, citecolor=red , linkcolor= blue}
\usepackage{cite}

\begin{document}
		\title[Structural Properties and Normality Criteria for Subclasses of Normaloid Operators]{Structural Properties and Normality Criteria for Subclasses of Normaloid Operators}
		\keywords{normal operators, paranormal operators, normaloid operators, polar decomposition, partial isometry}
		
		\subjclass[2020]{ 47B15, 47B20}
		
		\author[H. Stankovi\'c]{Hranislav Stankovi\'c}
		\address{Faculty of Electronic Engineering, University of Ni\v s, Aleksandra Medvedeva 4, Ni\v s, Serbia
		}
		\email{\url{hranislav.stankovic@elfak.ni.ac.rs}}
		
		\author[C. Kubrusly]{Carlos Kubrusly}
		\address{Catholic University of Rio de Janeiro, 22453-900, Rio de Janeiro, RJ, 
			Brazil
		}
		\email{\url{carlos@ele.puc-rio.br}}


		\date{\today} 
		
		\maketitle

		\begin{abstract}
			We investigate structural properties and normality criteria for certain classes of bounded linear operators on a Hilbert space. 
			We show that an operator $T$ with polar decomposition $T = U|T|$ is self-adjoint if and only if $T$ is absolute-$(p,r)$-paranormal and the partial isometry $U$ is self-adjoint. 
			Extending Ando’s Theorem, we prove that if $T$ is absolute-$(p,r)$-paranormal and $T^n$ is normal for some $n \in \mathbb{N}$, then $T$ itself is normal. 
			We further show that if $T$ is absolute-$(p,r)$-paranormal and $T^2$ is compact, then $T$ is a compact normal operator. 
			Finally, we obtain several characterizations of quasinormal partial isometries within the normaloid hierarchy.
		\end{abstract}
		
		\bigskip 
		
		\section{Introduction}\label{sec:intro}
		\bigskip
		
		Let $(\mathcal{H},\skal{\,}{\,})$ be a complex Hilbert space, and let $\mathfrak{B}(\mathcal{H})$ denote the algebra of bounded linear operators on $\mathcal{H}$. For an operator $T \in \mathfrak{B}(\mathcal{H})$, the null space and the range are denoted by $\mathcal{N}(T)$ and $\mathcal{R}(T)$, respectively. The adjoint of $T$ is denoted by $T^*$. An operator $T$ is said to be \emph{positive} ($T\geq 0$) if $\skal{Tx}{x}\geq 0$ for all $x\in\mathcal{H}$, \emph{self-adjoint} if $T=T^*$, and \emph{normal} if $T^*T = TT^*$. 
		
		An operator $P \in \mathfrak{B}(\mathcal{H})$ is called an \emph{orthogonal projection} if $P^2=P=P^*$. An operator $U\in\mathfrak{B}(\mathcal{H})$ is called a \emph{partial isometry} if $U^*U$ is an orthogonal projection, and \emph{unitary} if $U^*U=UU^*=I$. Recall that for any $T \in \mathfrak{B}(\mathcal{H})$, there exists a unique polar decomposition $T=U|T|$, where $|T|=(T^*T)^{1/2}$ and $U$ is a partial isometry satisfying $\mathcal{N}(U) = \mathcal{N}(|T|)$.
		
		\medskip 
		
		The spectral theory of normal operators is fundamental to operator theory and quantum mechanics. Consequently, considerable effort has been devoted to extending the properties of normal operators to broader classes. This has led to the study of non-normal operators defined by various operator inequalities. Some of the most prominent classes include:
		
		\begin{itemize}
			\item quasinormal operators: $T$ commutes with $T^*T$, i.e., $TT^*T = T^*T^2$;
			\item subnormal operators: $T$ is the restriction of a normal operator to an invariant subspace; i.e., there exist a Hilbert space $\mathcal{L}$ and a normal operator $N \in \mathfrak{B}(\mathcal{H}\oplus\mathcal{L})$ of the form
			\[
			N = \begin{bmatrix} T & * \\ 0 & * \end{bmatrix};
			\]
			\item hyponormal operators: $TT^* \leq T^*T$;
			\item $p$-hyponormal operators: $(TT^*)^p \leq (T^*T)^p$ for some $0<p\leq 1$;
			\item class $A$ operators: $T^*T \leq \left({T^*}^2 T^2\right)^{1/2}$;
			\item paranormal operators: $\|Tx\|^2 \leq \|T^2x\|\norm{x}$ for all $x \in \mathcal{H}$;
			\item normaloid operators: $r(T) = \|T\|$, where $r(T)$ denotes the spectral radius of $T$.
		\end{itemize}
		
		It is well known that the following proper inclusions hold:
		\begin{equation}\label{eq:inclusion_chain}
			\begin{split}
				\text{normal}
				&\subset \text{quasinormal}
				\subset \text{subnormal}
				\subset \text{hyponormal} \\
				&\subset \text{$p$-hyponormal}
				\subset \text{class $A$}
				\subset \text{paranormal} \\
				&\subset \text{normaloid}.
			\end{split}
		\end{equation}
		
		The classes of subnormal and hyponormal operators were introduced by Halmos in \cite{Halmos50}, while the study of quasinormal operators was initiated by Brown in \cite{Brown53}. The class of $p$-hyponormal operators was defined as an extension of hyponormal operators in \cite{Xia83} and has since been investigated extensively (see, e.g., \cite{Aluthge90, Aluthge96}). The concepts of paranormal operators and operators of class $A$ were introduced by Istr\v a\cb{t}escu in \cite{Istratescu67} and by Furuta, Ito, and Yamazaki in \cite{FurutaItoYamazaki98}, respectively. For further details on these operator classes, we refer the reader to \cite{Furuta01, Conway91, Kubrusly03}.
		
		Several generalizations of paranormal operators have emerged in the literature.
		\begin{definition}
			Let $k\in\mathbb{N}\cup\{0\}$. An operator \( T \in \mathfrak{B}(\mathcal{H}) \) is called 
			\( k \)-paranormal if
			\begin{equation*} 
				\norm{Tx}^{k+1}\leq\norm{T^{k+1}x}\norm{x}^k,\quad x\in\mathcal{H};
			\end{equation*}
		\end{definition}
		
		\begin{definition}
			Let $k>0$. An operator \( T \in \mathfrak{B}(\mathcal{H}) \) is called absolute-$k$-paranormal if
			\begin{equation*} 
				\norm{Tx}^{k+1}\leq\norm{|T|^kTx}\norm{x}^k,\quad x\in\mathcal{H}.
			\end{equation*}
		\end{definition}

		We note that
		\begin{align}\label{eq:1_paranormal}
			\text{paranormal}
			&\equiv \text{\( 1 \)-paranormal}
			\equiv \text{absolute-\( 1 \)-paranormal}.
		\end{align}
		The notion of \( k \)-paranormal operators was first mentioned in \cite{IstratescuIstratescu67} and later studied in \cite{DuggalKubrusly11, KubruslyDuggal10}. The class of absolute-\( k \)-paranormal operators was introduced in \cite{FurutaItoYamazaki98}.  Notably, both classes are contained within the class of normaloid operators.
		
		\medskip 
		
		There exist other natural generalizations of normality that do not necessarily fit into the standard inclusion chain \eqref{eq:inclusion_chain}. Recall that an operator $T \in \mathfrak{B}(\mathcal{H})$ is called \emph{binormal} if $T^*T$ commutes with $TT^*$. This class was introduced by Campbell in \cite{Campbell72} and further studied extensively in \cite{Campbell75, Campbell80}. The example of the $2 \times 2$ nilpotent matrix $T = \begin{bmatrix} 0 & 1 \\ 0 & 0 \end{bmatrix}$ demonstrates that a binormal operator need not be normaloid.
		In fact, binormal operators are not even hyponormal, nor are they implied by hyponormality.
        
		Another significant class is that of \emph{posinormal} operators, introduced in \cite{Rhaly94}. An operator $T \in \mathfrak{B}(\mathcal{H})$ is said to be posinormal if there exists a positive operator $P \in \mathfrak{B}(\mathcal{H})$ such that $TT^* = T^*PT$. While every hyponormal operator is posinormal \cite[Corollary 2.1]{Rhaly94}, posinormality does not, in general, imply the normaloid property. Indeed, any invertible non-normaloid operator serves as a counterexample (see \cite[Theorem 3.1]{Rhaly94}). For further details on posinormal operators, we refer the reader to \cite{KubruslyDuggal07, KubruslyVieieraZanni16}.

		\medskip 
		
		While many structural properties of normal operators are lost in the broad class of normaloid operators, subclasses within the paranormal family—such as $k$-paranormal and absolute-$k$-paranormal operators—retain several essential normal-like characteristics. Consequently, membership in these families often serves as a natural a priori assumption in various theorems investigating conditions that force an operator toward normality.
		
		A prominent example of this role is found in the study of the \emph{$n$-th root problem}: if $T$ belongs to a certain class of operators and $T^n$ is normal for some $n \in \mathbb{N}$, does it follow that $T$ is normal? This line of inquiry has a rich history. Stampfli \cite{Stampfli62} first established an affirmative answer for the class of hyponormal operators, a result later extended to the broader class of paranormal operators by Ando \cite{Ando72}. More recently, the problem was further generalized to include both $k$-paranormal and absolute-$k$-paranormal operators by Stankovi\'c and Kubrusly \cite{StankovicKubrusly25}. For a more detailed exposure of the $n$-root problem, see \cite[Section 2]{StankovicKubrusly25} and the references cited therein.
		
		\medskip 
		
		The primary objective of the present paper is to investigate whether such normality-inducing properties, along with the structural links arising from the polar decomposition, extend to the broadest family of paranormal-like normaloid operators established to date: the \emph{absolute-$(p,r)$-paranormal operators}. More precisely, 

        \begin{align*} 
			\text{normal}&\subset \cdots\subset 	\text{paranormal} \\&\subset \text{absolute-\( k \)-paranormal} \\&\subset \text{absolute-\( (p,r) \)-paranormal}\\&\subset \text{normaloid}.
		\end{align*}

		\bigskip 
		\section{Preliminaries}\label{sec:prelim}
		\bigskip

		In this section, we recall the definition and fundamental properties of absolute-$(p,r)$-paranormal operators, as well as some essential results from operator theory that will be utilized in the subsequent proofs.
		
		\begin{definition}\cite{YamazakiYanagida00}\label{def:pr_paranormal}
			For positive real numbers $p > 0$ and $r > 0$, an operator $T\in\mathfrak{B}(\mathcal{H})$ is said to be \emph{absolute-$(p,r)$-paranormal} if
			\begin{equation}\label{eq:definition_star}
				\| |T|^p |T^*|^r x \|^r \geq \| |T^*|^r x \|^{p+r}
			\end{equation}
			for every unit vector $x \in \mathcal{H}$, 
			or, equivalently,
			\begin{equation}\label{eq:definition_star_all}
				\| |T|^p |T^*|^r x \|^r\|x\|^p \geq \| |T^*|^r x \|^{p+r}
			\end{equation}
			for all $x \in \mathcal{H}$.
		\end{definition}
		
		Using the polar decomposition $T = U|T|$, the defining condition can be reformulated as follows.
		
		\begin{proposition}\cite[Proposition 1]{YamazakiYanagida00}\label{prop:def_U}
			Let $T = U |T|$ be the polar decomposition of $T\in\mathfrak{B}(\mathcal{H})$, and let $p, r > 0$. Operator $T$ is absolute-$(p,r)$-paranormal if and only if
			\begin{equation}\label{eq:definition_U}
				\| |T|^p U |T|^r x \|^r \geq \| |T|^r x \|^{p+r}
			\end{equation}
			for every unit vector $x \in \mathcal{H}$.
		\end{proposition}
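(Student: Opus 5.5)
The key identity is $|T^*|^r = U|T|^rU^*$, which holds for every $r>0$. I would prove the equivalence through this identity, using the substitution $x = Uy$ in one direction and $y = U^*x$ in the other.

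First recall $TT^* = U|T|^2U^*$ and $U^*U|T| = |T|$, where $U^*U$ is the projection onto $\overline{\mathcal{R}(|T|)}$. Then $(U|T|^2U^*)^n = U|T|^{2n}U^*$ for all $n$, so $f(TT^*) = Uf(|T|^2)U^*$ for every polynomial $f$ with $f(0)=0$. Approximating $t\mapsto t^{r/2}$ uniformly on $[0,\|T\|^2]$ by such polynomials gives $|T^*|^r = U|T|^rU^*$.

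Next, fix a vector $x$ and put $y = U^*x$. Since $U$ is isometric on $\overline{\mathcal{R}(|T|^r)}=\overline{\mathcal{R}(|T|)}$, we get
\[
\||T^*|^r x\| = \|U|T|^r y\| = \||T|^r y\| \quad\text{and}\quad \||T|^p|T^*|^r x\| = \||T|^pU|T|^r y\|.
\]
Also $\|y\|\le\|x\|$.

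Now suppose \eqref{eq:definition_U} holds. In its homogeneous form it reads $\||T|^pU|T|^ry\|^r\|y\|^p \ge \||T|^ry\|^{p+r}$ for all $y$. Applying this with $y=U^*x$ and using $\|y\|\le\|x\|$ gives \eqref{eq:definition_star_all}.

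Conversely, suppose \eqref{eq:definition_star_all} holds and take any $y$. Decompose $y = y_1 + y_0$ with $y_1\in\overline{\mathcal{R}(|T|)}$ and $y_0\in\mathcal{N}(|T|)$. Then $|T|^ry = |T|^ry_1$, while $\|y_1\|\le\|y\|$. Set $x = Uy_1$. Then $U^*x = U^*Uy_1 = y_1$ and $\|x\| = \|y_1\|$. The displayed identities turn \eqref{eq:definition_star_all} at $x$ into \eqref{eq:definition_U} for $y_1$. Replacing $y_1$ by $y$ leaves the $|T|^r$ terms unchanged and can only enlarge the norm factor, so \eqref{eq:definition_U} holds for $y$.

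Finally, the passage between the unit-vector and homogeneous forms follows from the homogeneity $\|T(\lambda x)\|=|\lambda|\|Tx\|$: both sides scale as $|\lambda|^{p+r}$.

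The main obstacle is the continuous functional calculus identity $|T^*|^r = U|T|^rU^*$ for non-integer $r$. It is routine, but it requires the $f(0)=0$ approximation argument. The handling of kernels also needs care, which is why the orthogonal decomposition of $y$ in the converse direction is necessary.
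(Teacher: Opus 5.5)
Your proof is correct and complete. The identity $|T^*|^r=U|T|^rU^*$, the substitution $y=U^*x$ with $\|U^*x\|\le\|x\|$, and in the converse the choice $x=Uy_1$ after discarding the $\mathcal{N}(|T|)$-component (which only enlarges the $\|y\|^p$ factor) give both implications.

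The paper does not prove this statement; it only cites \cite[Proposition 1]{YamazakiYanagida00}, and your argument is the natural one using only facts the paper itself records. The functional-calculus step you flag as the main obstacle is exactly Theorem~\ref{thm:polar_q}(ii), so you can cite it instead of re-deriving it.
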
 
		
		Clearly, for each $k>0$,
		\begin{align*}
			\text{absolute-\( k \)-paranormal}
			\;\equiv\;
			\text{absolute-\( (k,1) \)-paranormal}.
		\end{align*}
		Consequently, by \eqref{eq:1_paranormal},
		\begin{align*}
			\text{paranormal}
			\;\equiv\;
			\text{absolute-\( (1,1) \)-paranormal}.
		\end{align*}
		
		A useful characterization of absolute-$(p,r)$-paranormal operators in terms of operator inequalities is given by the following result.
		
		\begin{proposition}\cite{YamazakiYanagida00}\label{prop:lambda_char_pq}
			Let $p, r > 0$. An operator $T\in\mathfrak{B}(\mathcal{H})$ is absolute-$(p,r)$-paranormal if and only if 
			\begin{equation}\label{eq:lambda_char_pq}
				r \, |T^*|^r |T|^{2p} |T^*|^r
				- (p+r)\lambda^p |T^*|^{2r}
				+ p \lambda^{p+r} I
				\geq 0,  \quad \text{for all } \lambda > 0.
			\end{equation} 
		\end{proposition}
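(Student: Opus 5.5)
We reduce the pointwise inequality \eqref{eq:definition_star_all} to a family of quadratic-form inequalities by a weighted arithmetic--geometric mean argument. Throughout write $a = \| |T|^p |T^*|^r x\|^2 = \skal{|T^*|^r|T|^{2p}|T^*|^r x}{x}$, $b = \||T^*|^r x\|^2 = \skal{|T^*|^{2r}x}{x}$ and $c=\|x\|^2$. Then \eqref{eq:definition_star_all} reads
\[
a^{r/2} c^{p/2} \ge b^{(p+r)/2}, \quad\text{equivalently}\quad a^{r}c^{p}\ge b^{p+r}.
\]
The operator inequality \eqref{eq:lambda_char_pq}, paired with $x$, reads
\[
f(\lambda):=r\,a-(p+r)\lambda^p b+p\lambda^{p+r}c\ge 0 \qquad\text{for all }\lambda>0.
\]
So the task is to show that these two scalar conditions are equivalent for every triple $(a,b,c)$ arising from some $x$.

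For the direction from \eqref{eq:lambda_char_pq} to absolute-$(p,r)$-paranormality, fix $x$ with $b>0$; if $b=0$ the inequality is trivial. Then $c>0$. We minimize $f$ over $\lambda>0$. The derivative is
\[
f'(\lambda)=p(p+r)\lambda^{p-1}\bigl(\lambda^{r}c-b\bigr),
\]
so the minimum is attained at $\lambda_0=(b/c)^{1/r}$. At this point $\lambda_0^{p+r}c=\lambda_0^p b$, hence
\[
f(\lambda_0)=r\,a-r\lambda_0^p b = r\bigl(a-b^{(p+r)/r}c^{-p/r}\bigr).
\]
Nonnegativity of $f(\lambda_0)$ gives $a^r c^p\ge b^{p+r}$, which is \eqref{eq:definition_star_all}.

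For the converse, assume $a^rc^p\ge b^{p+r}$ and fix $\lambda>0$. We need $(p+r)\lambda^p b\le r a+p\lambda^{p+r}c$. Apply the weighted AM--GM inequality with weights $\tfrac{r}{p+r}$ and $\tfrac{p}{p+r}$:
\[
\tfrac{r}{p+r}\,a+\tfrac{p}{p+r}\,\lambda^{p+r}c\;\ge\; a^{r/(p+r)}\bigl(\lambda^{p+r}c\bigr)^{p/(p+r)} = \lambda^p\bigl(a^rc^p\bigr)^{1/(p+r)}\;\ge\;\lambda^p b.
\]
Multiplying by $p+r$ yields $f(\lambda)\ge0$. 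Since this holds for every $x$, the operator in \eqref{eq:lambda_char_pq} is positive.

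The only step requiring care is the minimization, specifically the boundary cases $b=0$ or $c=0$, together with confirming that $\lambda_0$ is a global minimum on $(0,\infty)$. This follows from the sign of $f'$: it is negative for $\lambda<\lambda_0$ and positive for $\lambda>\lambda_0$. The converse direction alone already gives the full equivalence when combined with the first direction, and no operator-theoretic input beyond rewriting norms as quadratic forms is needed.
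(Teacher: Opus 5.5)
The paper gives no proof of this proposition; it quotes it from \cite{YamazakiYanagida00}, so there is no in-paper argument to compare against. Your proof is correct and complete, and it is the standard scalar argument used for characterizations of this kind, such as Ando's criterion for paranormal operators.

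The reduction to the scalars $a,b,c$ is exact. In the forward direction, evaluating at $\lambda_0=(b/c)^{1/r}$ gives $f(\lambda_0)=r\bigl(a-b^{(p+r)/r}c^{-p/r}\bigr)$, and nonnegativity of this is $a^rc^p\ge b^{p+r}$. In the converse, the weighted AM--GM step is valid, including the degenerate cases $a=0$ or $b=0$.

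One small remark: your check that $\lambda_0$ is a global minimum (the sign analysis of $f'$) is never used. The forward direction only needs $f(\lambda_0)\ge 0$, which is given by hypothesis, and the converse goes through AM--GM. Alternatively, you could drop AM--GM and prove the converse as $f(\lambda)\ge f(\lambda_0)\ge 0$, treating $b=0$ separately; in that version minimality is exactly what is needed.
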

		
		The next two theorems show that the class of absolute-$(p,r)$-paranormal operators consists of normaloid operators and that this class is monotone with respect to the parameters $p$ and $r$.
		
		\begin{theorem}\cite[Theorem 8]{YamazakiYanagida00}\label{thm:pr_normaloid}
			Let $p,r>0$. Every absolute-$(p,r)$-paranormal operator is normaloid.
		\end{theorem}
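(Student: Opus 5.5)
Our plan is to show $\|T^{n}\|=\|T\|^{n}$ for every $n$; Gelfand's formula $r(T)=\lim_n\|T^n\|^{1/n}$ then gives $r(T)=\|T\|$. We may normalize $\|T\|=1$, since the defining inequality \eqref{eq:definition_star_all} is homogeneous. The case $T=0$ is trivial.

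The key step is a "chaining" inequality. Write $T=U|T|$ and use Proposition \ref{prop:def_U}. Put $y=|T|^r x/\||T|^rx\|$, whenever this vector is nonzero. Then \eqref{eq:definition_U} says that the ratio $\||T|^p U y\|$ dominates $\||T|^r x\|^{p/r}$. The aim is to turn this into the following statement: for unit vectors $x$ with $\|Tx\|$ close to $\|T\|=1$, the vector $Tx$ again has $\|T(Tx)\|$ close to $\|Tx\|$.

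We plan to do this via a norming sequence. Choose unit vectors $x_k$ with $\||T|x_k\|\to 1$. Since $0\le |T|\le I$, the spectral theorem gives $\||T|^s x_k\|\to 1$ for every $s>0$, and indeed $\||T|^s x_k - x_k\|\to 0$. Applying \eqref{eq:definition_U} then yields $\||T|^pU|T|^r x_k\|\to 1$. Since $\|U\|\le 1$ and $\||T|^p\|\le 1$, this forces $\|U|T|^r x_k\|\to 1$ and $\||T|^p v_k\|\to1$, where $v_k=U|T|^rx_k$. Hence $v_k$ is again an approximate norming sequence for $|T|$, and therefore for $T$.

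Moreover, $v_k - Tx_k = U(|T|^r-|T|)x_k\to 0$. So the vectors $w_k=Tx_k$ satisfy $\|w_k\|\to1$ and $\|Tw_k\|\to 1$, which gives $\|T^2x_k\|\to1$. To iterate, note that the sequence $Tx_k$, normalized, is again an approximate norming sequence. Repeating the argument $n-1$ times gives $\|T^n x_k\|\to 1$, hence $\|T^n\|=1=\|T\|^n$. This yields $r(T)=\|T\|$.

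The main obstacle is the approximation step: passing from $\||T|^s x_k\|\to\|T\|$ to $\||T|^s x_k - x_k\|\to 0$. This needs $\|x_k\|=1$ together with the spectral measure of $|T|$ at $x_k$ concentrating near $\|T\|$. We will handle it with the identity $\|(I-|T|^s)x\|^2=\langle (I-|T|^s)^2x,x\rangle\le\langle (I-|T|^{s})x,x\rangle$, valid since $0\le|T|^s\le I$. The right-hand side tends to zero by the convergence of the scalar moments $\langle |T|^s x_k,x_k\rangle\to 1$, which follows from Jensen/Hölder on the spectral measure.

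Alternatively, should the chaining prove delicate, we would use Proposition \ref{prop:lambda_char_pq}. Taking $\lambda$ near $\|T\|^2$ in \eqref{eq:lambda_char_pq} and testing against norming vectors of $|T^*|$ would produce the same approximate-eigenvector conclusion.
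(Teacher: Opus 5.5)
Your argument is correct. The paper does not prove this statement: it quotes it from Yamazaki--Yanagida without proof, so there is no in-paper argument to match. Your norming-sequence proof is a self-contained replacement that uses only Proposition~\ref{prop:def_U} and the spectral theorem for $|T|$.

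The steps hold:
\begin{enumerate}
\item With $\|T\|=1$ and unit vectors $x_k$ satisfying $\||T|x_k\|\to1$, the spectral measures of $|T|$ at $x_k$ concentrate at $1$, so $|T|^sx_k-x_k\to0$ for every $s>0$.
\item Then \eqref{eq:definition_U} forces $\|v_k\|\to1$ and $\||T|^pv_k\|\to1$ for $v_k=U|T|^rx_k$.
\item The error $v_k-Tx_k=U(|T|^r-|T|)x_k$ tends to $0$, so $\|T^2x_k\|\to1$.
\item Normalizing $Tx_k$ and inducting gives $\|T^nx_k\|\to1$.
\end{enumerate}

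You state one step too quickly: ``hence $v_k$ is again an approximate norming sequence for $|T|$.'' This is your concentration principle run in the opposite direction, from the $2p$-th moment of $u_k=v_k/\|v_k\|$ down to the second moment. For $p\geq1$ it follows from $|T|^2\geq|T|^{2p}$. For $p<1$ it follows from Theorem~\ref{thm:holder_mccarthy_ineq}(i) with $A=|T|^{2p}$ and $\alpha=1/p$, which gives $\langle|T|^2u_k,u_k\rangle\geq\||T|^pu_k\|^{2/p}\to1$. Write this out.

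Compare this with the usual inductive route for paranormal-type classes. For paranormal operators one applies the defining inequality to $T^{n-1}x$ to get $\|T^nx\|^2\leq\|T^{n+1}x\|\,\|T^{n-1}x\|$, and hence $\|T^{n+1}\|\geq\|T\|\,\|T^n\|$. For absolute-type classes such a pointwise inequality typically needs exponent bookkeeping: enlarging the parameters via Theorem~\ref{thm:monotonicity}, and comparing $|T|^s$ with $|T|$ via bounds like $\||T|^sy\|\leq\|T\|^{s-1}\||T|y\|$ for $s\geq1$. These are the kind of reductions the paper performs in Theorem~\ref{thm:compact_square_root}.

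In your approach $p$ and $r$ play essentially no role. Along a norming sequence every power $|T|^s$ acts approximately as the identity, so the mismatch between $|T|^r$ (or $|T^*|^r$) and $|T|$ becomes a vanishing error term. No monotonicity or Hölder--McCarthy reduction of the parameters is required. The price is that your argument is purely asymptotic and yields no norm inequality for individual vectors or orbits. For normaloidness this costs nothing, since $\|T^n\|=\|T\|^n$ and Gelfand's formula are all that is needed.
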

		
		\begin{theorem}\cite[Theorem 7]{YamazakiYanagida00}\label{thm:monotonicity}
			Let $T\in\mathfrak{B}(\mathcal{H})$ be absolute-$(p_0, r_0)$-paranormal for some $p_0, r_0 > 0$. Then $T$ is absolute-$(p, r)$-paranormal for all $p \geq p_0$ and $r \geq r_0$. 
		\end{theorem}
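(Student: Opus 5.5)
The plan is to treat the two parameters separately: first raise $p$ with $r=r_0$ fixed, then raise $r$ with $p$ fixed. Applying the two steps in succession gives the theorem, because the first step yields absolute-$(p,r_0)$-paranormality and the second then upgrades $r_0$ to $r$. Both steps rest on the Hölder--McCarthy inequality: for $A\geq 0$ and a unit vector $y$,
\[
\skal{A^s y}{y}\geq \skal{Ay}{y}^s \quad (s\geq 1),
\qquad
\skal{A^s y}{y}\leq \skal{Ay}{y}^s \quad (0<s\leq 1).
\]

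\emph{Monotonicity in $p$.} Fix $r$ and let $p\geq p_0$. Take a unit vector $x$ with $|T^*|^r x\neq 0$; if $|T^*|^r x=0$ there is nothing to prove. Put $y=|T^*|^r x/\norm{|T^*|^r x}$. Dividing \eqref{eq:definition_star} by $\norm{|T^*|^r x}^r$ shows that absolute-$(p,r)$-paranormality is equivalent to
\[
\norm{|T|^p y}^r\geq \norm{|T^*|^r x}^p .
\]
Hölder--McCarthy with $A=|T|^{2p_0}$ and $s=p/p_0\geq 1$ gives $\norm{|T|^p y}\geq \norm{|T|^{p_0}y}^{p/p_0}$. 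Raising the $p_0$-inequality $\norm{|T|^{p_0}y}^r\geq\norm{|T^*|^r x}^{p_0}$ to the power $p/p_0$ then gives the $p$-inequality.

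\emph{Monotonicity in $r$.} Fix $p$, let $r\geq r_0$, and work with the form of Proposition~\ref{prop:def_U}. Take a unit vector $x$ with $|T|^r x\neq0$; otherwise the inequality is trivial. Then $|T|^{r-r_0}x\neq 0$, so we may apply the $r_0$-condition to the unit vector $w=|T|^{r-r_0}x/\norm{|T|^{r-r_0}x}$. Since $|T|^{r_0}w=|T|^r x/\norm{|T|^{r-r_0}x}$, homogeneity gives
\[
\norm{|T|^pU|T|^r x}^{r_0}\geq \norm{|T|^r x}^{p+r_0}\,\norm{|T|^{r-r_0}x}^{-p}.
\]
It remains to show that the right-hand side is at least $\norm{|T|^r x}^{(p+r)r_0/r}$; raising to the power $r/r_0$ then yields \eqref{eq:definition_U} for $(p,r)$.

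Comparing exponents, this reduces to
\[
\norm{|T|^{r-r_0}x}^{p}\leq \norm{|T|^r x}^{p(r-r_0)/r}.
\]
This is Hölder--McCarthy with $A=|T|^{2r}$ and $s=(r-r_0)/r\in[0,1)$; the case $r=r_0$ is trivial.

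The only point needing care is the $r$-step. Its key choice is the auxiliary vector $w$, which absorbs the extra power $|T|^{r-r_0}$ into the test vector so that the $r_0$-condition can be applied. Once $w$ is chosen, the exponent bookkeeping reduces everything to a concave Hölder--McCarthy estimate, and the degenerate cases where the relevant vectors vanish are handled directly.
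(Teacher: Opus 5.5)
Your proof is correct. Both steps check out, including the degenerate cases and the exponent bookkeeping:
\begin{itemize}
\item In the $p$-step, you normalize $|T^*|^r x$ and apply the convex H\"older--McCarthy inequality with $A=|T|^{2p_0}$.
\item In the $r$-step, you test the $(p,r_0)$-form of Proposition~\ref{prop:def_U} on the normalized vector $|T|^{r-r_0}x$. You then close with the concave H\"older--McCarthy estimate $\norm{|T|^{r-r_0}x}\leq\norm{|T|^{r}x}^{(r-r_0)/r}$.
\end{itemize}
The paper gives no proof of this statement; it only cites \cite[Theorem 7]{YamazakiYanagida00}. Your argument is a complete, self-contained proof that uses only Proposition~\ref{prop:def_U} and Theorem~\ref{thm:holder_mccarthy_ineq}, both already recorded in the preliminaries.
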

		
		\medskip 
		
		We conclude this section by recalling fundamental results from operator theory that provide the necessary technical framework for the forthcoming arguments.
		
		\begin{theorem}\cite[p. 58]{Furuta01}\label{thm:polar_q}
			Let $T=U|T|$ be the polar decomposition of an operator $T\in\mathfrak{B}(\mathcal{H})$. Then:
			\begin{enumerate}[label=\textit{(\roman*)}]
				\item $\mathcal{N}(|T|)=\mathcal{N}(T)$;
				\item $|T^*|^q=U|T|^qU^*$ for any $q>0$.
			\end{enumerate}
		\end{theorem}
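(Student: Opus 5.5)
The plan is to reduce part (i) to standard facts about the partial isometry $U$, and part (ii) to functional calculus applied to $|T|^2=T^*T$ transported by $U$.

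For (i), one inclusion is immediate: $\mathcal{N}(T)\subseteq\mathcal{N}(T^*T)=\mathcal{N}(|T|^2)$. Conversely, $T^*Tx=0$ gives $\norm{Tx}^2=\skal{T^*Tx}{x}=0$, so $\mathcal{N}(T)=\mathcal{N}(|T|^2)$. Finally, for the positive operator $|T|$ we have $\norm{|T|x}^2=\skal{|T|^2x}{x}$, hence $\mathcal{N}(|T|)=\mathcal{N}(|T|^2)=\mathcal{N}(T)$. Equivalently, one may just note $\norm{|T|x}=\norm{Tx}$ for all $x$.

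For (ii), I first record the facts about $U$. The operator $U^*U$ is the orthogonal projection onto $\mathcal{N}(U)^\perp=\mathcal{N}(|T|)^\perp=\overline{\mathcal{R}(|T|)}$. Hence $U^*U|T|=|T|$, and also $|T|U^*U=|T|$ by taking adjoints. The first step is then $UU^*=U|T|^2U^*\cdot$, more precisely
\[
TT^*=U|T||T|U^*=U|T|^2U^*,
\]
so $|T^*|^2=U|T|^2U^*$.

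Next I claim that $(U|T|^2U^*)^n=U|T|^{2n}U^*$ for every $n\in\mathbb{N}$. This holds because the inner factors $U^*U$ are absorbed by $|T|^2$. By linearity, $q(U|T|^2U^*)=Uq(|T|^2)U^*$ for every polynomial $q$ with $q(0)=0$.

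The main step is passing from polynomials to the continuous function $f(t)=t^{q/2}$ on $[0,\norm{T}^2]$, which satisfies $f(0)=0$ since $q>0$. By Weierstrass, choose polynomials $q_k\to f$ uniformly on this interval; replacing $q_k$ by $q_k-q_k(0)$ keeps uniform convergence and gives $q_k(0)=0$. The spectra of $|T|^2$ and $U|T|^2U^*$ both lie in $[0,\norm{T}^2]$. By continuity of the functional calculus in the sup norm, $q_k(|T|^2)\to|T|^q$ and $q_k(|T^*|^2)\to|T^*|^q$ in norm. Taking limits in $q_k(|T^*|^2)=Uq_k(|T|^2)U^*$ then yields $|T^*|^q=U|T|^qU^*$.

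The only delicate point, and the one I expect to be the obstacle, is the constant term. A polynomial with nonzero constant term would produce $cI$ on the left but only $cUU^*$ on the right. This is exactly why the normalization $q_k(0)=0$, made possible by $q>0$, is essential.
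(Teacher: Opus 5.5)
The paper gives no proof of this statement. It is quoted from Furuta's book as a standard preliminary, so there is no internal argument to compare yours against.

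Your proof is correct and complete.
\begin{itemize}
\item Part (i) follows from $\norm{|T|x}^2=\skal{|T|^2x}{x}=\norm{Tx}^2$.
\item In part (ii), the key facts are justified correctly. $U^*U$ is the projection onto $\overline{\mathcal{R}(|T|)}$, so $|T|^2U^*U=|T|^2$, and hence $(U|T|^2U^*)^n=U|T|^{2n}U^*$.
\item The passage to $t\mapsto t^{q/2}$ is also sound. It uses polynomials vanishing at $0$, norm continuity of the functional calculus on $[0,\norm{T}^2]$, and the composition rule giving $(|T|^2)^{q/2}=|T|^q$.
\end{itemize}

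Conceptually, you are showing that $X\mapsto UXU^*$ is a $*$-homomorphism on the corner $U^*U\,\mathfrak{B}(\mathcal{H})\,U^*U$. It therefore commutes with the continuous functional calculus for functions vanishing at $0$. This handles every $q>0$ at once and shows exactly why $q>0$ is needed: for $q=0$ you would get $I$ on one side and $UU^*$ on the other.

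Another common route is to check that $U|T|U^*$ is positive with square $TT^*$, then invoke uniqueness of positive square roots. That gives $q=1$ immediately. To reach arbitrary real $q>0$, it needs uniqueness of positive $n$-th roots for rational $q$, plus a continuity argument in $q$. Your approximation argument avoids that two-stage step.

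Two cosmetic points. First, delete the garbled phrase ``$UU^*=U|T|^2U^*\cdot$, more precisely'', since that identity is false in general. Second, avoid using the letter $q$ both for the exponent and for the approximating polynomials.
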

		
		\begin{theorem}\label{thm:star_polar}\cite[ p. 59]{Furuta01} 
			Let $T = U\vert T \vert$ be the polar decomposition of $T\in\mathfrak{B}(\mathcal{H})$. Then $T^{*} = U^{*} \vert T^{*}\vert$ is also the polar decomposition of the operator $T^{*}.$
		\end{theorem}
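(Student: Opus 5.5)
The statement is the standard fact that if $T=U|T|$ is the polar decomposition of $T$, then $T^*=U^*|T^*|$ is the polar decomposition of $T^*$. The plan is to check three things directly: the factorization $T^*=U^*|T^*|$ holds, $U^*$ is a partial isometry, and $\mathcal{N}(U^*)=\mathcal{N}(|T^*|)$. Uniqueness of the polar decomposition, recalled in the introduction, then identifies it as \emph{the} polar decomposition of $T^*$.

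\emph{Factorization.} Theorem~\ref{thm:polar_q}(ii) with $q=1$ gives $|T^*|=U|T|U^*$. Hence
\[
U^*|T^*| = U^*U|T|U^*.
\]
Here $U^*U$ is the orthogonal projection onto $\mathcal{N}(U)^\perp=\mathcal{N}(|T|)^\perp=\overline{\mathcal{R}(|T|)}$, the last equality because $|T|$ is self-adjoint. Therefore $U^*U|T|=|T|$, and so $U^*|T^*|=|T|U^*=(U|T|)^*=T^*$.

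\emph{Partial isometry.} If $U^*U=P$ is a projection, then $(UU^*)^2=UPU^*=UU^*$, since $UP=U$ (because $U$ vanishes on $\mathcal{N}(U)=\mathcal{R}(P)^\perp$). As $UU^*$ is also self-adjoint, it is an orthogonal projection, so $U^*$ is a partial isometry.

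\emph{Kernel condition.} This is the only step needing care, and it goes through ranges.
\begin{itemize}
\item $\mathcal{N}(|T^*|)=\mathcal{N}(T^*)$, by Theorem~\ref{thm:polar_q}(i) applied to $T^*$, or simply from $\||T^*|x\|=\|T^*x\|$.
\item $\mathcal{N}(T^*)=\mathcal{R}(T)^\perp$.
\item $\mathcal{N}(U^*)=\mathcal{R}(U)^\perp$.
\end{itemize}
So it suffices to show $\overline{\mathcal{R}(T)}=\mathcal{R}(U)$, where $\mathcal{R}(U)$ is closed because $U$ is a partial isometry. For the inclusion $\subseteq$: $\mathcal{R}(T)=U\mathcal{R}(|T|)\subseteq\mathcal{R}(U)$. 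For $\supseteq$: $U$ is isometric on $\overline{\mathcal{R}(|T|)}$, so
\[
\mathcal{R}(U)=U\,\overline{\mathcal{R}(|T|)}\subseteq\overline{U\mathcal{R}(|T|)}=\overline{\mathcal{R}(T)}.
\]

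Combining the three parts, $U^*$ is a partial isometry with $\mathcal{N}(U^*)=\mathcal{N}(|T^*|)$ and $T^*=U^*|T^*|$, so by uniqueness this is the polar decomposition of $T^*$.
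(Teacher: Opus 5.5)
Your proof is correct and complete. The factorization $U^*|T^*| = U^*U|T|U^* = |T|U^* = T^*$, the computation $(UU^*)^2 = UU^*$, and the identification $\mathcal{R}(U) = \overline{\mathcal{R}(T)}$ (which gives $\mathcal{N}(U^*) = \mathcal{N}(T^*) = \mathcal{N}(|T^*|)$) are exactly the defining properties of the polar decomposition of $T^*$, so the final appeal to uniqueness is not even needed. The paper does not prove this result; it only cites Furuta's book, and your direct check of the defining properties is the standard argument behind that citation.
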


        \begin{theorem}\label{thm:T_imply_U}\cite[ p. 75]{Furuta01} 
Let $T = U|T|$ be the polar decomposition of an operator $T$. Then
\begin{enumerate}[label=\textit{(\roman*)}]
    \item If $T$ is binormal, then so is $U$.
    \item If $T$ is quasinormal, then so is $U$; $U = \text{isometry} \oplus 0$ on $N(T)^{\perp} \oplus N(T)$.
    \item If $T$ is normal, then so is $U$; $U = \text{unitary} \oplus 0$ on $N(T)^{\perp} \oplus N(T)$.
    \item If $T$ is self-adjoint, then so is $U$; $U = \text{symmetry} \oplus 0$ on $N(T)^{\perp} \oplus N(T)$.
    \item If $T$ is positive, then so is $U$; $U = \text{projection}$.
\end{enumerate}
\end{theorem}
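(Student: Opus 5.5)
The plan is to prove the five items in order, each building on the previous one. Throughout I use two standard facts about the polar decomposition. First, $U^*U=P$ is the orthogonal projection onto $\overline{\mathcal{R}(|T|)}=\mathcal{N}(T)^\perp$, and $UU^*=Q$ is the projection onto $\overline{\mathcal{R}(T)}=\mathcal{N}(T^*)^\perp$. Second, the factorisation is unique: if $T=V A$ with $A\ge 0$, $V$ a partial isometry and $\mathcal{N}(V)=\mathcal{N}(A)$, then $A=|T|$ and $V=U$.

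\emph{(i) Binormal.} For a positive operator $A$, the range projection of $A$ is the strong limit of $A^{1/n}$, so it lies in the von Neumann algebra generated by $A$. Apply this to $A=T^*T$, whose range projection is $P$, and to $A=TT^*$, whose range projection is $Q$. Binormality says $T^*T$ and $TT^*$ commute, so $P$ commutes with everything commuting with $T^*T$. In particular $P$ commutes with $TT^*$, and hence with $Q$. Thus $U^*U$ commutes with $UU^*$, i.e. $U$ is binormal.

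\emph{(ii) Quasinormal.} The hypothesis says $T$ commutes with $|T|^2$, hence with $|T|$ by the continuous functional calculus. This gives $U|T|^2=|T|U|T|$, that is, $(U|T|-|T|U)|T|=0$. So $U|T|=|T|U$ on $\overline{\mathcal{R}(|T|)}$. On $\mathcal{N}(|T|)=\mathcal{N}(U)$ both sides vanish. Hence $U$ commutes with $|T|$, and therefore with its range projection $P=U^*U$. This gives $U U^*U=U^*UU$, i.e. $U=U^*U^2$, so $U$ is quasinormal. Since $U$ commutes with $P$, the subspace $\mathcal{N}(T)^\perp$ reduces $U$. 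On it $U^*U=I$, so $U$ restricts to an isometry, and on $\mathcal{N}(T)=\mathcal{N}(U)$ it is $0$.

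\emph{(iii) Normal.} A normal $T$ is quasinormal, so (ii) applies. Moreover $\mathcal{N}(T)=\mathcal{N}(T^*)$ and $\overline{\mathcal{R}(T)}=\mathcal{N}(T)^\perp$, so $P=Q$, i.e. $U^*U=UU^*$ and $U$ is normal. The isometric part on $\mathcal{N}(T)^\perp$ then has range projection $Q=P$, so it is surjective there and hence unitary.

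\emph{(iv) Self-adjoint.} Here $|T^*|=|T|$. By Theorem~\ref{thm:star_polar}, $T=T^*=U^*|T^*|=U^*|T|$ is a polar decomposition. Uniqueness gives $U=U^*$. Combining this with (iii), the unitary part on $\mathcal{N}(T)^\perp$ is self-adjoint, i.e. a symmetry.

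\emph{(v) Positive.} Here $T=|T|=P|T|$, and $P$ is a partial isometry with $\mathcal{N}(P)=\mathcal{N}(|T|)$. Uniqueness gives $U=P$, a projection, which is positive.

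The only genuinely delicate point is the extension of the commutation $U|T|=|T|U$ from $\overline{\mathcal{R}(|T|)}$ to all of $\mathcal{H}$ in (ii). This is where the kernel condition $\mathcal{N}(U)=\mathcal{N}(|T|)$ is essential. The remaining items are bookkeeping with range projections and the uniqueness of the polar decomposition.
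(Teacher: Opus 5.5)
The paper gives no proof of this statement; it quotes the result directly from Furuta's book \cite[p.~75]{Furuta01}, so there is no in-paper argument to compare yours against. Your proof is correct and self-contained, and it follows the standard route to these facts:
\begin{itemize}
\item for (i), the range projections of $T^*T$ and $TT^*$ lie in the von Neumann algebras those operators generate;
\item for (ii)--(iii), quasinormality is characterized by $U|T|=|T|U$, and the kernel condition $\mathcal{N}(U)=\mathcal{N}(|T|)$ lets you extend this identity off $\overline{\mathcal{R}(|T|)}$;
\item for (iv)--(v), you use uniqueness of the polar decomposition.
\end{itemize}
The only tacit facts are $\mathcal{N}(U^*)=\mathcal{N}(|T^*|)=\mathcal{N}(|T|)$ when $T=T^*$, and $|T|=T$ when $T\ge 0$. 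Both are immediate.
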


		\begin{theorem}\cite{McCarthy67} \cite[p. 123]{Furuta01}[H\"older-McCarthy inequality]\label{thm:holder_mccarthy_ineq}
			Let $A\in\mathfrak{B}(\mathcal{H})$ be a positive operator. Then for any unit vector $x \in \mathcal{H}$:
			\begin{enumerate}[label=\textit{(\roman*)}]
				\item $\langle A^\alpha x, x \rangle \geq \langle A x, x \rangle^\alpha$ for $\alpha > 1$;
				\item $\langle A^\alpha x, x \rangle \leq \langle A x, x \rangle^\alpha$ for $\alpha \in [0,1]$.
			\end{enumerate}
		\end{theorem}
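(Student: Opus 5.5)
The statement to be proved is the H\"older--McCarthy inequality (Theorem~\ref{thm:holder_mccarthy_ineq}). I will derive it from the spectral theorem for the positive operator $A$ together with Jensen's inequality.

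First I fix a unit vector $x$. Let $E$ be the spectral measure of $A$, supported in $[0,\|A\|]$, and set $\mu(\cdot)=\skal{E(\cdot)x}{x}$. This $\mu$ is a Borel probability measure on $\sigma(A)\subset[0,\infty)$, since $\mu([0,\|A\|])=\|x\|^2=1$. The continuous functional calculus gives
\[
\skal{A^\alpha x}{x}=\int t^\alpha\,d\mu(t)\quad\text{and}\quad \skal{Ax}{x}=\int t\,d\mu(t).
\]

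For part (i), with $\alpha>1$, the function $\varphi(t)=t^\alpha$ is convex on $[0,\infty)$. Jensen's inequality for the probability measure $\mu$ then yields $\varphi\left(\int t\,d\mu\right)\le\int\varphi(t)\,d\mu$, which is exactly $\skal{Ax}{x}^\alpha\le\skal{A^\alpha x}{x}$.

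For part (ii), with $\alpha\in(0,1)$, the function $t^\alpha$ is concave on $[0,\infty)$, so Jensen's inequality reverses and gives $\skal{A^\alpha x}{x}\le\skal{Ax}{x}^\alpha$. The endpoint cases need a separate check. For $\alpha=1$ there is equality. For $\alpha=0$ we have $A^0=I$, so the left side is $1$ and the right side is $\skal{Ax}{x}^0=1$, using the convention $0^0=1$.

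A measure-free alternative is available if wanted. For $\alpha\in(0,1)$ I would use the tangent-line inequality $t^\alpha\le s^\alpha+\alpha s^{\alpha-1}(t-s)$ with $s=\skal{Ax}{x}>0$, substitute $A$ for $t$ via functional calculus, and take the inner product with $x$. This gives $\skal{A^\alpha x}{x}\le s^\alpha$. If $s=0$, then $A^{1/2}x=0$, so $A^\alpha x=0$ and the inequality is trivial. For $\alpha>1$ the same argument works with the reversed tangent-line inequality for a convex function.

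The only delicate points are edge cases: $\skal{Ax}{x}=0$ and $\alpha=0$. Apart from these, the whole argument reduces to Jensen's inequality.
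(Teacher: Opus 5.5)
Your proof is correct. The paper itself does not prove this statement; it quotes it from McCarthy and Furuta as a standard tool. Only part (i) is used later, with $A=|T|^2$ and $\alpha=s\ge 1$, in the compactness theorem.

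Your argument is essentially the classical one. The usual proof also writes $\langle A^\alpha x,x\rangle=\int t^\alpha\,d\mu(t)$ against the probability measure $\mu=\langle E(\cdot)x,x\rangle$. It then applies H\"older's inequality with the constant function $1$, which carries the same content as your Jensen step for $t\mapsto t^\alpha$.

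That route has one small economy: only one direction needs proof. Once (i) holds and $\alpha\in(0,1)$, apply (i) to the positive operator $A^\alpha$ with exponent $1/\alpha>1$. This gives $\langle Ax,x\rangle=\langle (A^\alpha)^{1/\alpha}x,x\rangle\ge\langle A^\alpha x,x\rangle^{1/\alpha}$, which is (ii). No separate concavity argument is needed.

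Your tangent-line variant is a good measure-free alternative, since it uses only that the continuous functional calculus preserves order. Your edge cases are also handled soundly: $\langle Ax,x\rangle=0$ via $\mathcal{N}(A^{1/2})=\mathcal{N}(A^\alpha)$, and $\alpha=0$ via the conventions $A^0=I$ and $0^0=1$.
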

		
		\medskip 
		
		The rest of the paper is organized as follows.  
		In Section~\ref{sec:self_polar}, we investigate the relationship between the self-adjointness of an operator $T$ and its polar factor $U$ within the class of absolute-$(p,r)$-paranormal operators. 
		In Section~\ref{sec:powers}, we address, among other topics, the $n$-th root problem by extending Ando's Theorem to absolute-$(p,r)$-paranormal operators, and we also present several results concerning compactness.  
		Finally, Section~\ref{sec:partial_iso} is devoted to new characterizations of quasinormal partial isometries.

		\bigskip 
		\section{Self-adjointness criteria via polar decomposition}\label{sec:self_polar}
		\bigskip 
		
		In recent papers by the first author \cite{Stankovic24, Stankovic25}, the following result was established.
		
		\begin{theorem}\cite[Corollary~2.1]{Stankovic24}\cite[Theorem~1.7]{Stankovic25}\label{thm:u_self_adjoint_p_hypo}
			Let $T = U|T|$ be the polar decomposition of an operator $T \in \mathfrak{B}(\mathcal{H})$, and let $0 < p \leq 1$. The following conditions are equivalent:
			\begin{enumerate}[label=\textit{(\roman*)}]
				\item $T$ is self-adjoint;
				\item $T$ is $p$-hyponormal and $U$ is self-adjoint.
			\end{enumerate}
		\end{theorem}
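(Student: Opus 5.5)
The implication (i)$\Rightarrow$(ii) is immediate. A self-adjoint $T$ is normal, hence $p$-hyponormal for every $0<p\le 1$, since $TT^*=T^*T$ gives $(TT^*)^p=(T^*T)^p$. Moreover $U$ is self-adjoint by Theorem~\ref{thm:T_imply_U}(iv).

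For (ii)$\Rightarrow$(i), assume $T=U|T|$ is $p$-hyponormal and $U=U^*$. The plan is to show that $U$ commutes with $|T|$; then $T^*=|T|U^*=|T|U=U|T|=T$. Put $P=U^*U$, the projection onto $\overline{\mathcal R(|T|)}=\mathcal N(T)^\perp$. Since $U=U^*$, we also have $UU^*=U^2=P$, so $U$ is a symmetry on $\mathcal N(T)^\perp$ and zero on $\mathcal N(T)$. By Theorem~\ref{thm:polar_q}(ii), $|T^*|^{2p}=U|T|^{2p}U^*=U|T|^{2p}U$, so $p$-hyponormality reads
\[
U|T|^{2p}U\le |T|^{2p}.
\]
Conjugating by $U$ (which preserves order) and using $U^2=P$ together with $P|T|^{2p}P=|T|^{2p}$ gives
\[
|T|^{2p}=U\bigl(U|T|^{2p}U\bigr)U\le U|T|^{2p}U.
\]
Combining the two inequalities yields $U|T|^{2p}U=|T|^{2p}$.

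Multiplying this identity on the left by $U$ gives $P|T|^{2p}U=U|T|^{2p}$, and since $P|T|^{2p}=|T|^{2p}$, we get $|T|^{2p}U=U|T|^{2p}$. Thus $U$ commutes with $A=|T|^{2p}\ge 0$, and therefore with every continuous function of $A$, in particular $|T|=A^{1/(2p)}$. This completes the argument, and the paper's generalization to absolute-$(p,r)$-paranormal operators should follow a similar pattern.

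The one delicate step is the reverse inequality. There the projection identities $U^2=P$ and $P|T|^{2p}P=|T|^{2p}$ are essential: they are what let us conjugate the $p$-hyponormality inequality back to $|T|^{2p}$. The rest is routine functional calculus.
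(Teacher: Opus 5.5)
Your proof is correct, and it reaches the result by a somewhat different route from the one the paper relies on. The paper does not reprove this statement: it cites \cite{Stankovic24}, which uses the Aluthge transform, and \cite{Stankovic25}, which uses operator inequalities intrinsic to $p$-hyponormality. Its own argument appears only for the generalization, Theorem~\ref{thm:u_self_adjoint_pq}.

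That proof shares your key mechanism. When $U=U^*$, the map $X\mapsto UXU$ is order preserving and swaps $|T|^q$ and $|T^*|^q$, since $U|T|^qU=|T^*|^q$ and $U|T^*|^qU=U^2|T|^qU^2=P|T|^qP=|T|^q$. So the defining condition passes from $T$ to $T^*$.

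The difference lies in how normality is then extracted. For $p$-hyponormality the transferred condition is exactly the reverse inequality $|T|^{2p}\le |T^*|^{2p}$, so you close by antisymmetry of the operator order. That step is elementary and self-contained, and it never uses $p\le 1$, L\"owner--Heinz, or the Aluthge transform. For absolute-$(p,r)$-paranormality the transferred condition only says that $T^*$ lies in the same class, and there is no antisymmetry to exploit. The paper therefore has to invoke the external result \cite[Theorem 2]{YamazakiYanagida03}: if $T$ and $T^*$ are both absolute-$(p,r)$-paranormal, then $T$ is normal.

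So your closing remark is right about the overall pattern. In the general case, however, the final step is genuinely harder and is outsourced to that external theorem.
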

		
		The proof in \cite[Theorem~1.7]{Stankovic25} relies on intrinsic operator inequalities associated with $p$-hyponormal operators, whereas the approach in \cite[Corollary~2.1]{Stankovic24} is based on the Aluthge transform. Moreover, \cite[Theorem~2.6]{Stankovic24} shows that conditions \textit{(i)} and \textit{(ii)} of Theorem~\ref{thm:u_self_adjoint_p_hypo} are also equivalent to the following condition:
		\begin{itemize}
			\item[$(iii)'$] $T$ is $p$-hyponormal and
			\[
			U^*|T|U \leq \operatorname{Re}(U)\,|T|\,\operatorname{Re}(U).
			\]
		\end{itemize}
		
		The next result extends Theorem~\ref{thm:u_self_adjoint_p_hypo} to a broader class of absolute-$(p,r)$-paranormal operators.
		
		\begin{theorem}\label{thm:u_self_adjoint_pq}
			Let $T = U|T|$ be the polar decomposition of an operator $T \in \mathfrak{B}(\mathcal{H})$, and let $p,r>0$. The following conditions are equivalent:
			\begin{enumerate}[label=\textit{(\roman*)}]
				\item $T$ is self-adjoint;
				\item $T$ is absolute-$(p,r)$-paranormal and $U$ is self-adjoint.
			\end{enumerate}
		\end{theorem}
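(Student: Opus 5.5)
For (i)$\Rightarrow$(ii) I would argue directly. If $T=T^*$, then $U$ is self-adjoint by Theorem~\ref{thm:T_imply_U}\textit{(iv)}. Also $T$ is normal, so $|T^*|=|T|$.

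Put $A=|T|^2$. The condition \eqref{eq:definition_star} for a unit vector $x$ then reads
\[
\langle A^{p+r}x,x\rangle^{r}\ \geq\ \langle A^{r}x,x\rangle^{p+r}.
\]
This is the Hölder--McCarthy inequality, Theorem~\ref{thm:holder_mccarthy_ineq}\textit{(i)}, applied to the positive operator $A^r$ with exponent $\alpha=(p+r)/r>1$, after raising both sides to the power $r$. I go through this inequality rather than through paranormality and Theorem~\ref{thm:monotonicity}, because monotonicity only reaches $p,r\geq 1$.

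For (ii)$\Rightarrow$(i), set $A=|T|$ and $B=|T^*|$. Since $U=U^*$, the operator $U^2=U^*U=P$ is the orthogonal projection onto $\overline{\mathcal{R}(|T|)}=\mathcal{N}(T)^\perp$. Moreover $PA=AP=A$, and $U$ restricted to $\mathcal{N}(T)^\perp$ is a self-adjoint unitary of that subspace.

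By Theorem~\ref{thm:polar_q}\textit{(ii)}, $B^q=UA^qU$ for all $q>0$. Hence $B$ and $A$ are unitarily equivalent on $\mathcal{N}(T)^\perp$ via $U$, and both vanish on $\mathcal{N}(T)$, since $\mathcal{N}(T^*)=\mathcal{N}(U^*)=\mathcal{N}(U)=\mathcal{N}(T)$. It then suffices to show that $U$ commutes with $A$. Indeed, in that case $T^*=AU=UA=T$.

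Next I transport the inequality \eqref{eq:lambda_char_pq} through $U$. Conjugating
\[
rB^rA^{2p}B^r-(p+r)\lambda^pB^{2r}+p\lambda^{p+r}I\geq 0
\]
by $U$, and using $UB^rU=PA^rP=A^r$ and $UA^{2p}U=B^{2p}$, gives
\[
rA^rB^{2p}A^r-(p+r)\lambda^pA^{2r}+p\lambda^{p+r}P\geq 0.
\]
On $\mathcal{N}(T)$ this holds trivially. Since $P\le I$, the version with $I$ in place of $P$ also holds. By Proposition~\ref{prop:lambda_char_pq}, applied with the roles of $T$ and $T^*$ exchanged, both $T$ and $T^*$ are absolute-$(p,r)$-paranormal. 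In vector form, for unit $x$,
\[
\|A^pB^rx\|^r\geq\|B^rx\|^{p+r},\qquad \|B^pA^rx\|^r\geq\|A^rx\|^{p+r}.
\]

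The main obstacle is to turn this symmetric pair, together with the unitary equivalence $B=UAU$, into $A=B$.

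My first attempt would use a spectral-subspace argument.
\begin{enumerate}
\item Fix $t>0$ and let $E_t$ be the spectral projection of $A$ for $[0,t]$. Then $UE_tU$ is the corresponding spectral projection of $B$ on $\mathcal{N}(T)^\perp$.
\item For $x\in\mathcal{R}(E_t)$ with $\|x\|=1$, the second inequality combined with Hölder--McCarthy should force $B$ to be large on $A^rx$ wherever $A$ is small.
\item By the unitary equivalence, $A$ and $B$ have spectral subspaces of the same size at every level. A ``mass-counting'' comparison of $\langle E_t x,x\rangle$ against the corresponding spectral projection of $B$ should then give $E_t\le UE_tU$, and symmetrically $UE_tU\le E_t$.
\item Equality of the two families of spectral projections yields $B=A$, hence $UA=AU$.
\end{enumerate}

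If this comparison proves too delicate, I would fall back on a second route. Consider the Hermitian parts of $U$: the spectral projections $Q_\pm$ of $U$ for the eigenvalues $\pm1$, which satisfy $P=Q_++Q_-$. Write $A$ as a $2\times 2$ block matrix with respect to $\mathcal{R}(Q_+)\oplus\mathcal{R}(Q_-)$ (plus the kernel). Then $B=UAU$ is the same matrix with the off-diagonal blocks negated. I would test the two inequalities on vectors in $\mathcal{R}(Q_\pm)$ and aim to show that the off-diagonal blocks vanish, which is exactly the statement that $U$ commutes with $A$.
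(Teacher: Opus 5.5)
Your direction (i)$\Rightarrow$(ii) is correct. The H\"older--McCarthy computation with $A^r$ and $\alpha=(p+r)/r$ is in fact more explicit than the paper, which only cites Theorem~\ref{thm:T_imply_U} for $U$. In (ii)$\Rightarrow$(i), conjugating the inequality of Proposition~\ref{prop:lambda_char_pq} by $U$ correctly shows that $T^*$ is absolute-$(p,r)$-paranormal; this uses $U^2=U^*U=UU^*=P$, $UB^rU=A^r$, $UA^{2p}U=B^{2p}$ and $P\le I$. This is exactly the first half of the paper's proof, which makes the same transfer in vector form. The genuine gap is the step you yourself call the main obstacle: passing from ``$T$ and $T^*$ are both absolute-$(p,r)$-paranormal, with $B=UAU$'' to $A=B$. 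Neither of your routes proves this. The paper does not do it by hand either. It invokes \cite[Theorem~2]{YamazakiYanagida03}, a normality criterion for operators $T$ such that $T$ and $T^*$ are both absolute-$(p,r)$-paranormal, and then concludes $T^*=U^*|T^*|=U|T|=T$. Without that result, or an argument of comparable strength, your proof stops short of the conclusion.

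Route~1 breaks at step~3. Write $E^A_{[s,\infty)}$ and $E^B_{[s,\infty)}$ for the spectral projections of $A$ and $B$ on $[s,\infty)$, with $s>0$. Put $y=A^rx$ with $x\in\mathcal{R}(E^A_{[s,\infty)})$, so that $\|x\|\le s^{-r}\|y\|$. Then the vector form $\|A^pUA^rx\|^r\|x\|^p\geq\|A^rx\|^{p+r}$ only gives $\|B^py\|\geq s^p\|y\|$. That is an averaged lower bound for the compression of $B^{2p}$ to a spectral subspace of $A$. It does not place $y$ in $\mathcal{R}(E^B_{[s,\infty)})$, since $y$ may combine spectral components of $B$ lying above and below $s$. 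The ``mass-counting'' upgrade to an inclusion of spectral projections is a dimension argument. In finite dimensions it can be rescued by peeling off the top eigenvalue and inducting, but there the theorem is trivial anyway (cf. Theorem~\ref{thm:compact_normal}). In infinite dimensions the spectral subspaces are typically infinite-dimensional and the spectrum need not consist of eigenvalues. Unitary equivalence of the spectral projections of $A$ and $B$ then carries no order information. Route~2 is only a plan. Testing the inequalities on vectors of $\mathcal{R}(Q_\pm)$ yields inequalities involving the off-diagonal block of $A$ that do not by themselves force it to vanish, and you give no mechanism that would.
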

		
		\begin{proof}
			The implication $(i)\Rightarrow(ii)$ follows directly from Theorem \ref{thm:T_imply_U}, so it suffices to prove $(ii)\Rightarrow(i)$.
			
			Since $T$ is absolute-$(p,r)$-paranormal, \eqref{eq:definition_star_all} implies that for every $y \in \mathcal{H}$,
			\begin{equation}\label{eq:paranormal_y}
				\| |T^*|^r y \|^{p+r} \leq \| |T|^p |T^*|^r y \|^r \, \|y\|^p.
			\end{equation}
			Let $x \in \mathcal{H}$ be arbitrary. Substituting $y = Ux$ into \eqref{eq:paranormal_y}, we obtain
			\[
			\| |T^*|^r Ux \|^{p+r} \leq \| |T|^p |T^*|^r Ux \|^r \, \|Ux\|^p.
			\]
			Since $T^* = U|T^*|$ is the polar decomposition of $T^*$ by Theorem \ref{thm:star_polar}, and $U=U^*$, it follows that $U^2 = UU^*$ is the orthogonal projection onto $\overline{\mathcal{R}(T)} = \overline{\mathcal{R}(|T^*|^r)}$. 
			Hence $U^2 |T^*|^r = |T^*|^r$, and therefore, using Theorem \ref{thm:polar_q},
			\begin{align*}
				\| |T|^r x \|^{p+r}
				&= \| U|T^*|^r Ux \|^{p+r}
				= \| |T^*|^r Ux \|^{p+r} \\
				&\leq \| U|T|^p U^2 |T^*|^r Ux \|^r \, \|Ux\|^p \\
				&\leq \| (U|T|^p U)(U|T^*|^r U)x \|^r \, \|U\|^p \, \|x\|^p \\
				&\leq \| |T^*|^p |T|^r x \|^r \, \|x\|^p.
			\end{align*}
			This shows that $T^*$ is also $(p,r)$-paranormal. By \cite[Theorem 2]{YamazakiYanagida03}, it follows that $T$ is normal, and hence $|T| = |T^*|$. Consequently,
			\[
			T^* = U|T| = T,
			\]
			which proves that $T$ is self-adjoint.
		\end{proof}
		
		It remains unclear whether an analogue of condition $(iii)'$ is equivalent to conditions $(i)$ and $(ii)$ in Theorem~\ref{thm:u_self_adjoint_pq}.
		
		\begin{example}\label{ex:normaloid}
			The conclusion of Theorem~\ref{thm:u_self_adjoint_pq} fails if $(p,r)$-paranormality is replaced by normaloidness. Indeed, consider
			\[
			T = \begin{bmatrix}
				2 & 0 & 0 \\
				0 & 0 & 2 \\
				0 & 1 & 0
			\end{bmatrix}.
			\]
			Then
			\[
			T^*T
			= \begin{bmatrix}
				4 & 0 & 0 \\
				0 & 1 & 0 \\
				0 & 0 & 4
			\end{bmatrix},
			\]
			and hence $\|T\| = \sqrt{\|T^*T\|} = 2$. Since the eigenvalues of $T$ are $\{2,\pm\sqrt{2}\}$, we have $r(T) = 2$, so $T$ is normaloid.
			
			On the other hand,
			\[
			U = T|T|^{-1}
			= \begin{bmatrix}
				1 & 0 & 0 \\
				0 & 0 & 1 \\
				0 & 1 & 0
			\end{bmatrix},
			\]
			which is self-adjoint, while $T$ is not.
		\end{example}
		
		The choice of a $3 \times 3$ matrix in the previous example is essential, as the same conclusion fails in lower dimensions. Indeed, in the two-dimensional case, the implication holds as a direct consequence of the following result, which identifies normality and normaloidness for $2 \times 2$ matrices.
		
		\begin{theorem}
			Let $T$ be a $2\times 2$ matrix. Then the following conditions are equivalent:
			\begin{enumerate}[label=\textit{(\roman*)}]
				\item $T$ is normal;
				\item $T$ is normaloid.
			\end{enumerate}
		\end{theorem}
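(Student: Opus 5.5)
The implication (i)$\Rightarrow$(ii) is immediate, since every normal operator $T$ satisfies $r(T)=\|T\|$. For (ii)$\Rightarrow$(i), I would use Schur triangularization. Choose a unitary $W$ such that
\[
W^*TW=\begin{bmatrix} \lambda_1 & a\\ 0 & \lambda_2\end{bmatrix}.
\]
Both normality and normaloidness are invariant under unitary similarity, because the norm and the spectrum are preserved. So it suffices to show that normaloidness of this upper triangular matrix forces $a=0$. Once $a=0$, the matrix is diagonal and hence normal, which gives the conclusion.

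Order the eigenvalues so that $|\lambda_1|\geq|\lambda_2|$. Then $r(T)=|\lambda_1|$. If $\lambda_1=0$, then $T$ is nilpotent, $r(T)=0$, and normaloidness gives $\|T\|=0$, hence $a=0$.

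Otherwise I will compare $\|T\|$ with the norms of particular vectors. Apply $T$ to the unit vector $e_2$: we get $Te_2=(a,\lambda_2)^T$, so $\|T\|^2\geq |a|^2+|\lambda_2|^2$. This bound alone need not exceed $|\lambda_1|^2$, so instead I will use the adjoint. Since $\|T\|=\|T^*\|$, compute $T^*e_1=(\overline{\lambda_1},\overline{a})^T$. This gives
\[
\|T\|^2\geq |\lambda_1|^2+|a|^2.
\]
Normaloidness says $\|T\|^2=r(T)^2=|\lambda_1|^2$, and therefore $|a|^2\leq 0$, i.e.\ $a=0$. This argument also covers the case $\lambda_1=0$, so no separate case is needed.

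The only point needing care is choosing the right test vector, namely the first column of $T^*$, which corresponds to the first row of $T$. It captures both $\lambda_1$ and $a$ and so yields the decisive inequality. Apart from that, the argument is routine.
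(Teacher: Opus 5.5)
Your proof is correct. It starts exactly like the paper's, with Schur triangularization and $|\lambda_1|\ge|\lambda_2|$ on the diagonal, but the decisive step is different.

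The paper computes $\|\widehat T\|^2$ exactly as the larger eigenvalue of $\widehat T^*\widehat T$, using its trace and determinant. It then argues that this expression is nondecreasing in $|\alpha|^2$ and equals $|\lambda_1|^2$ only when $\alpha=0$.

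You replace all of that with the single lower bound $\|T\|^2=\|T^*\|^2\ge\|T^*e_1\|^2=|\lambda_1|^2+|a|^2$, i.e., the first row of $T$ has norm at most $\|T\|$. This forces $a=0$ at once, with no closed-form norm and no equality-case analysis. As you note, it also covers $\lambda_1=0$ without a separate case.

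Your route is also more conceptual. It shows that the eigenvector $e_1$ for an eigenvalue of modulus $\|T\|$ satisfies $T^*e_1=\overline{\lambda_1}e_1$, so it spans a reducing subspace. In dimension $2$ the orthogonal complement is one-dimensional, so $T$ is diagonal. This explains why dimension $2$ is the threshold. In the $3\times 3$ matrix of Example~\ref{ex:normaloid}, the peripheral eigenvector $e_1$ does reduce $T$, but the compression to its $2$-dimensional complement, $\begin{bmatrix}0&2\\1&0\end{bmatrix}$, has norm $2$ and spectral radius $\sqrt{2}$, so it is not normaloid.

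The paper's computation gives an explicit formula for $\|\widehat T\|$, but that adds no leverage for the statement itself.
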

		
		\begin{proof}
			We only prove $(ii)\Rightarrow(i)$.
			Assume that $T$ is normaloid. By the Schur decomposition, $T$ is unitarily similar to a matrix
			\[
			\widehat{T}=
			\begin{bmatrix}
				\lambda_1 & \alpha \\
				0 & \lambda_2
			\end{bmatrix},
			\]
			where $\lambda_1,\lambda_2 \in \mathbb{C}$ are the eigenvalues of $T$ and $\alpha \in \mathbb{C}$. Since normaloidness is preserved under unitary equivalence, $\widehat{T}$ is also normaloid. Without loss of generality, assume $|\lambda_1| \geq |\lambda_2|$, so that $\|\widehat{T}\| = r(\widehat{T}) = |\lambda_1|$.
			
			A direct computation yields
			\[
			\widehat{T}^*\widehat{T}
			=
			\begin{bmatrix}
				\overline{\lambda_1} & 0 \\
				\overline{\alpha} & \overline{\lambda_2}
			\end{bmatrix}
			\begin{bmatrix}
				\lambda_1 & \alpha \\
				0 & \lambda_2
			\end{bmatrix}
			=
			\begin{bmatrix}
				|\lambda_1|^2 & \overline{\lambda_1}\alpha \\
				\overline{\alpha}\lambda_1 & |\alpha|^2+|\lambda_2|^2
			\end{bmatrix}.
			\]
			The characteristic polynomial of $\widehat{T}^*\widehat{T}$ is given by
			\[
			\det(\widehat{T}^*\widehat{T}-\lambda I)
			=
			\lambda^2-\operatorname{tr}(\widehat{T}^*\widehat{T})\lambda
			+\det(\widehat{T}^*\widehat{T}),
			\]
			and therefore
			\begin{align*}
				\|\widehat{T}\|^2
				&=
				\dfrac{\operatorname{tr}(\widehat{T}^*\widehat{T})
					+\sqrt{[\operatorname{tr}(\widehat{T}^*\widehat{T})]^2
						-4\det(\widehat{T}^*\widehat{T})}}{2} \\
				&=
				\dfrac{|\lambda_1|^2+|\lambda_2|^2+|\alpha|^2
					+\sqrt{\left(|\lambda_1|^2+|\lambda_2|^2+|\alpha|^2\right)^2
						-4|\lambda_1|^2|\lambda_2|^2}}{2}.
			\end{align*}
			Since $\|\widehat{T}\|=|\lambda_1|$, we obtain
			\begin{align*}
				|\lambda_1|^2
				&=
				\dfrac{|\lambda_1|^2+|\lambda_2|^2+|\alpha|^2
					+\sqrt{\left(|\lambda_1|^2+|\lambda_2|^2+|\alpha|^2\right)^2
						-4|\lambda_1|^2|\lambda_2|^2}}{2} \\
				&\geq
				\dfrac{|\lambda_1|^2+|\lambda_2|^2
					+\sqrt{\left(|\lambda_1|^2+|\lambda_2|^2\right)^2
						-4|\lambda_1|^2|\lambda_2|^2}}{2} \\
				&=
				\dfrac{|\lambda_1|^2+|\lambda_2|^2
					+\sqrt{\left(|\lambda_1|^2-|\lambda_2|^2\right)^2}}{2} \\
				&=
				\dfrac{|\lambda_1|^2+|\lambda_2|^2+|\lambda_1|^2-|\lambda_2|^2}{2}
				=|\lambda_1|^2.
			\end{align*}
			Hence,
			\begin{align*}
				&\quad
				|\lambda_1|^2+|\lambda_2|^2+|\alpha|^2
				+\sqrt{\left(|\lambda_1|^2+|\lambda_2|^2+|\alpha|^2\right)^2
					-4|\lambda_1|^2|\lambda_2|^2} \\
				&=
				|\lambda_1|^2+|\lambda_2|^2
				+\sqrt{\left(|\lambda_1|^2+|\lambda_2|^2\right)^2
					-4|\lambda_1|^2|\lambda_2|^2}.
			\end{align*}
			Since
			\[
			|\lambda_1|^2+|\lambda_2|^2+|\alpha|^2 \geq |\lambda_1|^2+|\lambda_2|^2
			\]
			and
			\[
			\sqrt{\left(|\lambda_1|^2+|\lambda_2|^2+|\alpha|^2\right)^2
				-4|\lambda_1|^2|\lambda_2|^2}
			\geq
			\sqrt{\left(|\lambda_1|^2+|\lambda_2|^2\right)^2
				-4|\lambda_1|^2|\lambda_2|^2},
			\]
			both inequalities must be equalities. Consequently,
			\[
			|\lambda_1|^2+|\lambda_2|^2+|\alpha|^2
			=
			|\lambda_1|^2+|\lambda_2|^2,
			\]
			which forces $\alpha=0$. Thus, $\widehat{T}$ is normal, and since normality is preserved
			under unitary similarity, $T$ is normal as well. This completes the proof.
		\end{proof}

		\bigskip 
		
		\section{Powers of absolute-$(p,r)$-paranormal operators}\label{sec:powers}
		\bigskip

		We begin by recalling some standard nomenclature: an operator $A \in \mathfrak{B}(\mathcal{H})$ is referred to as a \textit{scalar operator} if it is a complex multiple of the identity, i.e., $A = \lambda I$ for some $\lambda \in \mathbb{C}$.
		
		\begin{lemma}\label{lem:normaloid_scalar}
			Let $T \in \mathfrak{B}(\mathcal{H})$ be a normaloid operator. If $T^n$ is a scalar operator for some $n\in\mathbb{N}$, then $T$ is normal. Moreover, if $T$ is non-zero, it is a non-zero multiple of a unitary operator.
		\end{lemma}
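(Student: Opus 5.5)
Suppose $T^n=\lambda I$ for some $n\in\mathbb{N}$ and $\lambda\in\mathbb{C}$. The plan is to use normaloidness of the powers of $T$ together with the spectral radius formula to force $T/\|T\|$ to be an invertible contraction whose inverse is also a contraction, hence unitary.

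\emph{The case $\lambda=0$.} Then $T$ is nilpotent, so $\sigma(T)=\{0\}$ and $r(T)=0$. Normaloidness gives $\|T\|=r(T)=0$. Hence $T=0$, which is trivially normal.

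\emph{The case $\lambda\neq 0$.} First I will show $\|T\|^n=|\lambda|$.
\begin{itemize}
\item By the spectral mapping theorem, $\sigma(T)^n=\sigma(T^n)=\{\lambda\}$, so every $\mu\in\sigma(T)$ satisfies $|\mu|=|\lambda|^{1/n}$.
\item Hence $r(T)=|\lambda|^{1/n}$, and normaloidness gives $\|T\|=|\lambda|^{1/n}$.
\end{itemize}

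Next set $S=T/\|T\|$, so that $\|S\|=1$ and $S^n=\omega I$ with $|\omega|=1$. Then $S$ is invertible with $S^{-1}=\bar\omega S^{n-1}$. This gives the bound
\[
\|S^{-1}\|\le \|S\|^{n-1}=1 .
\]

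From $\|S\|\le 1$ and $\|S^{-1}\|\le1$ we get, for every $x$,
\[
\|x\|=\|S^{-1}Sx\|\le\|Sx\|\le\|x\| .
\]
So $S$ is an isometry, and being invertible it is surjective. Therefore $S$ is unitary, and $T=\|T\|S$ is a nonzero multiple of a unitary operator, in particular normal.

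The only step needing care is the bound $\|S^{-1}\|\le1$. It relies on $\|S^{n-1}\|\le\|S\|^{n-1}=1$, which is submultiplicativity and holds for any operator; normaloidness of the powers is never needed. I therefore expect no real obstacle. The one point to check is that the spectral mapping argument genuinely pins $r(T)$ to $|\lambda|^{1/n}$, which holds because the spectrum is nonempty.
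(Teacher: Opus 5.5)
Your proof is correct and follows essentially the same route as the paper: normalize $T$ so its spectrum lies on the unit circle, use normaloidness to get $\|S\|=1$, write $S^{-1}$ as a multiple of $S^{n-1}$ to get $\|S^{-1}\|\le 1$, and conclude that $S$ is unitary. The differences are cosmetic: you normalize by $\|T\|$, so $S^n=\omega I$ with $|\omega|=1$ rather than $S^n=I$, and you prove directly that an invertible contraction with contractive inverse is unitary, where the paper cites this fact.
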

		
		\begin{proof}
			Since $T^n$ is a scalar operator, we may write $T^n = \lambda I$ for some $\lambda \in \mathbb{C}$. If $\lambda = 0$, then $T$ is nilpotent. Since the only nilpotent normaloid operator is the null operator, the result follows trivially for $T = 0$.
			
			Suppose now that $\lambda \neq 0$. We define the normalized operator $S = \lambda^{-1/n}T$, which is clearly normaloid and satisfies $S^n = I$. The power identity $S^n = I$ implies that the spectrum $\sigma(S)$ is contained within the unit circle in the complex plane.
			
			Consequently, $S$ is invertible and its spectral radius satisfies $r(S) = 1$. The normaloid property of $S$ then ensures that $\|S\| = r(S) = 1$. Furthermore, the condition $S^n = I$ combined with $\|S\| = 1$ implies that $\|S^k\| = 1$ for every $k \in \{1, \dots, n\}$, since
			\[
			1 = \|S^n\| \leq \|S^{n-k}\| \|S^k\| \leq \|S\|^{n-k} \|S^k\| = \|S^k\| \leq \|S\|^k = 1.
			\]
			The relation $S^n = I$ also implies $S^{-1} = S^{n-1}$. Given that $\|S^{n-1}\| = 1$, it follows that $\|S^{-1}\| = 1$. Thus, $S$ is an invertible contraction whose inverse is also a contraction. On a Hilbert space, such an operator is necessarily unitary (see, e.g., \cite[Problem 2.6]{Kubrusly03}). Therefore, $T = \lambda^{1/n}S$ is a scalar multiple of a unitary operator and is, consequently, normal.
		\end{proof}

		\begin{corollary}\label{cor:root_of_scalar}
			Let $T\in\mathfrak{B}(\mathcal{H})$ be an absolute-$(p,r)$-paranormal operator for some $p,r>0$. If $T^n$ is a scalar operator for some $n\in\mathbb{N}$, then $T$ is normal.
		\end{corollary}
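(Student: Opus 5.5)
This follows by combining two results already established in the paper. By Theorem~\ref{thm:pr_normaloid}, every absolute-$(p,r)$-paranormal operator is normaloid. Hence $T$ is a normaloid operator whose power $T^n$ is a scalar operator. Lemma~\ref{lem:normaloid_scalar} then applies verbatim and shows that $T$ is normal. The lemma also yields slightly more: if $T\neq 0$, then $T$ is a nonzero scalar multiple of a unitary operator.

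So the plan has two steps. First, invoke Theorem~\ref{thm:pr_normaloid} to place $T$ in the normaloid class. Second, feed this into Lemma~\ref{lem:normaloid_scalar} with the same $n$ and the scalar $\lambda$ for which $T^n=\lambda I$.

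There is no real obstacle here. The analytic work sits in the two cited results: the normaloid property of absolute-$(p,r)$-paranormal operators from \cite{YamazakiYanagida00}, and the argument in Lemma~\ref{lem:normaloid_scalar} that reduces to an invertible contraction with contractive inverse. The only point to check is that Lemma~\ref{lem:normaloid_scalar} needs nothing beyond normaloidness. In the case $\lambda=0$ it uses that a nilpotent normaloid operator is zero, since $\|T\|=r(T)=0$. In the case $\lambda\neq0$ it uses the normalization $S=\lambda^{-1/n}T$, which is again normaloid. Both facts hold for arbitrary normaloid operators, so the corollary follows immediately.
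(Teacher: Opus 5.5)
Your proposal is correct and matches the paper's proof exactly: the paper also obtains the corollary by combining Theorem~\ref{thm:pr_normaloid} (absolute-$(p,r)$-paranormal operators are normaloid) with Lemma~\ref{lem:normaloid_scalar}. Your check that the lemma needs only normaloidness, which is preserved under the scaling $S=\lambda^{-1/n}T$ and forces a nilpotent operator to vanish, is accurate.
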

		
		\begin{proof}
			The proof follows directly from Lemma \ref{lem:normaloid_scalar} and Theorem \ref{thm:pr_normaloid}.
		\end{proof}

		The following theorem extends \cite[Theorem 3.1]{StankovicKubrusly25} and employs an analogous proof strategy.
		
		\begin{theorem}\label{thm:pq_paranormal_root}
			Let $T\in\mathfrak{B}(\mathcal{H})$ be an absolute-$(p,r)$-paranormal operator for some $p,r>0$. If there exists $n\in\mathbb{N}$ such that $T^n$ is normal, then $T$ is normal.
		\end{theorem}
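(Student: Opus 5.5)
We follow the strategy of Ando's theorem and of \cite[Theorem 3.1]{StankovicKubrusly25}: decompose $\mathcal{H}$ using the spectral measure of the normal operator $N=T^n$, and reduce to pieces on which $T^n$ is "almost scalar". Then we use normaloidness of restrictions, together with the structure of absolute-$(p,r)$-paranormal operators, to force normality.

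\emph{Step 1: reducing subspaces.} Let $E$ be the spectral measure of $N=T^n$. Since $T$ commutes with $N$, the Fuglede--Putnam theorem shows that $T$ commutes with $N^*$, and hence with every spectral projection $E(\Delta)$. Thus each $\mathcal{H}_\Delta=E(\Delta)\mathcal{H}$ reduces $T$. The class of absolute-$(p,r)$-paranormal operators is inherited by restrictions to reducing subspaces, because $|T|$ and $|T^*|$ split accordingly. So $T|_{\mathcal{H}_\Delta}$ is again absolute-$(p,r)$-paranormal, hence normaloid by Theorem \ref{thm:pr_normaloid}.

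\emph{Step 2: the kernel part and the annular parts.} On $\mathcal{H}_0=\mathcal{N}(N)=E(\{0\})\mathcal{H}$, the restriction $T_0$ satisfies $T_0^n=0$. A nilpotent normaloid operator is zero, so $T_0=0$. For $\Delta$ a small Borel set (say a polar rectangle) away from $0$, we compare $T_\Delta=T|_{\mathcal{H}_\Delta}$ with a $\lambda^{1/n}$-multiple of a unitary operator, where $\lambda\in\Delta$. The idea is as follows.
\begin{itemize}
\item Normaloidness gives $\|T_\Delta\|=r(T_\Delta)=\max\{|\mu|^{1/n}:\mu\in\overline{\Delta}\}$.
\item The same applies to $T_\Delta^{-1}$: it equals $T_\Delta^{n-1}N_\Delta^{-1}$, and we bound it using $\|T_\Delta^{n-1}\|\le\|T_\Delta\|^{n-1}$.
\end{itemize}
This yields $\|T_\Delta\|\,\|T_\Delta^{-1}\|\le\bigl(\sup_\Delta|\mu|/\inf_\Delta|\mu|\bigr)^{\cdots}$, which tends to $1$ as $\Delta$ shrinks. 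An operator $A$ with $\|A\|\|A^{-1}\|=1$ is a multiple of a unitary; the approximate version shows that $T_\Delta$ is close to normal. Letting the partition refine, $T$ restricted to $E(\mathbb{C}\setminus\{0\})\mathcal{H}$ becomes a norm limit of normal operators that are direct sums over partitions. This gives $T^*T-TT^*=0$ there: the commutator of each block is bounded by $\|T_\Delta\|^2-\|T_\Delta^{-1}\|^{-2}$, which is small uniformly over blocks. Alternatively, following Ando's original route, one can show $|T|$ and $|T^*|$ agree on each block up to an error that vanishes.

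\emph{Step 3: assembling.} $\mathcal{H}=\mathcal{H}_0\oplus\mathcal{H}_0^\perp$ reduces $T$, with $T=0\oplus T'$ and $T'$ normal. Hence $T$ is normal.

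The main obstacle is Step 2's estimate on $\|T_\Delta^{-1}\|$. Normaloidness alone controls $\|T_\Delta\|$ but not the lower bound of $T_\Delta$. Here I would use $T_\Delta^{-1}=T_\Delta^{n-1}N_\Delta^{-1}$ together with $\|T_\Delta^{n-1}\|=\|T_\Delta\|^{n-1}$. The latter equality holds for normaloid operators, but we only need the inequality $\le$, which is trivial. This gives $\|T_\Delta^{-1}\|\le\|T_\Delta\|^{n-1}/\inf_\Delta|\mu|$, and with $\|T_\Delta\|^n\le\sup_\Delta|\mu|$ we get $\|T_\Delta x\|\ge(\inf|\mu|/\sup|\mu|)\,\|T_\Delta\|\,\|x\|$. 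So $T_\Delta^*T_\Delta$ is squeezed between $c_\Delta^2\|T_\Delta\|^2 I$ and $\|T_\Delta\|^2 I$ with $c_\Delta\to1$. The same bound holds for $T_\Delta T_\Delta^*$, which yields the uniform commutator estimate.
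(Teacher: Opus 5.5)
Your argument is correct, and it takes a genuinely different route from the paper.

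\textbf{The paper's route.} The paper writes $\mathcal{H}$ as a direct integral over $\sigma(T^n)$ with respect to the von Neumann algebra generated by $T^n$. Then almost every fiber satisfies $T_\lambda^n=\lambda I_\lambda$ \emph{exactly}. It must then:
\begin{itemize}
\item transfer absolute-$(p,r)$-paranormality to almost every fiber, using the operator-inequality characterization in Proposition~\ref{prop:lambda_char_pq} together with \cite[Proposition 2.6.1]{Nielsen80};
\item apply Lemma~\ref{lem:normaloid_scalar} fiberwise;
\item separately reduce the non-separable case to the separable one.
\end{itemize}

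\textbf{Your route.} You use only spectral projections of $N=T^n$, which reduce $T$ by Fuglede's theorem, so inheritance of the defining inequality is trivial. You replace the exact fiberwise identity with a quantitative version of Lemma~\ref{lem:normaloid_scalar}. On $E(\Delta)\mathcal{H}$ with $\Delta$ bounded away from $0$, normaloidness and $T_\Delta^{-1}=T_\Delta^{n-1}N_\Delta^{-1}$ give $\|T_\Delta\|\,\|T_\Delta^{-1}\|\le \sup_\Delta|\mu|/\inf_\Delta|\mu|$. This yields the self-commutator bound $\|T_\Delta^*T_\Delta-T_\Delta T_\Delta^*\|\le \|T\|^2\bigl(1-(\inf_\Delta|\mu|/\sup_\Delta|\mu|)^2\bigr)$.

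\textbf{What each buys.} Your argument is more elementary: it needs no direct-integral theory or measurability. It works in non-separable spaces at once. It also makes clear that the only property used is that the restriction of $T$ to each spectral subspace of $T^n$ is normaloid. The paper's route reduces cleanly to the exact scalar case and reuses Lemma~\ref{lem:normaloid_scalar} verbatim, in parallel with \cite{StankovicKubrusly25}.

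\textbf{Points to tidy.}
\begin{itemize}
\item In your first bullet the equality should be the inequality $\|T_\Delta\|^n=r(N_\Delta)\le\sup_\Delta|\mu|$, since $\sigma(N_\Delta)$ need not fill $\overline{\Delta}$. The inequality is all you use.
\item For uniformity over blocks, partition by modulus only; angular cuts are unnecessary. For example, take the annuli $\Delta_k=\{\|N\|(1+\epsilon)^{-k-1}<|\mu|\le\|N\|(1+\epsilon)^{-k}\}$, $k\ge 0$, so every block has modulus ratio at most $1+\epsilon$.
\item $T^*T-TT^*$ is block diagonal with respect to $\mathcal{N}(N)\oplus\bigoplus_k E(\Delta_k)\mathcal{H}$. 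Hence its norm is at most $\|T\|^2\bigl(1-(1+\epsilon)^{-2}\bigr)$ for every $\epsilon>0$, so it vanishes.
\item With this, your sentence about norm limits of normal operators is unnecessary.
\end{itemize}
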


		\begin{proof}
			First assume that the Hilbert space $\mathcal{H}$ is separable.
			Let $\mathcal{A}$ denote the abelian von Neumann algebra generated by $T^n$.   By the Spectral Theorem, the Hilbert space $\mathcal{H}$ can be identified with a direct integral
			\[
			\mathcal{H} = \int_{\sigma(T^n)}^\oplus \mathcal{H}_\lambda \, d\mu(\lambda),
			\]
			where each operator in $\mathcal{A}$ acts as a multiplication operator on this decomposition. Specially, 
			\begin{equation*}
				T^n = \int_{\sigma(T^n)}^\oplus \lambda \, d\mu(\lambda).
			\end{equation*}
			Furthermore, the commutant $\mathcal{A}'$ of $\mathcal{A}$ is decomposable relative to this representation. Since $T \in \mathcal{A}'$, it follows that $T$ can be expressed as
			\[
			T = \int_{\sigma(T^n)}^\oplus T_\lambda \, d\mu(\lambda),
			\]
			where $T_\lambda$ is an operator on $\mathcal{H}_\lambda$ for all $\lambda\in\sigma(T^n)$. 
			Therefore, we have that 
			\begin{equation*} 
				T_\lambda^n = \lambda I_\lambda,
			\end{equation*}
			for $\mu$-almost all $\lambda\in\sigma(T^n)$, 
			where $I_\lambda$ denotes the identity operator on Hilbert space $\mathcal{H}_\lambda$, $\lambda\in\sigma(T^n)$. 
			
			Now let $\zeta>0$ be arbitrary. Since $T$ is absolute-$(p,r)$-paranormal, it satisfies \eqref{eq:lambda_char_pq}, and thus,
			\begin{equation*}
				\int_{\sigma(T^n)}^\oplus\left[r \, |T_\lambda^*|^r |T_\lambda|^{2p} |T_\lambda^*|^r - (p+r)\zeta^p |T_\lambda^*|^{2r} + p \zeta^{p+r} I_\lambda\right]\,d\mu(\lambda)\geq 0.
			\end{equation*}
			By \cite[Proposition 2.6.1]{Nielsen80}, we have that
			\begin{equation*} 
				r \, |T_\lambda^*|^r |T_\lambda|^{2p} |T_\lambda^*|^r - (p+r)\zeta^p |T_\lambda^*|^{2r} + p \zeta^{p+r} I_\lambda\geq 0,\quad \zeta>0,
			\end{equation*}
			for $\mu$-almost all $\lambda\in\sigma(T^n)$. Proposition \ref{prop:lambda_char_pq} implies that $T_\lambda$ is absolute-$(p,r)$-paranormal for $\mu$-almost all $\lambda\in\sigma(T^n)$. By Corollary \ref{cor:root_of_scalar},   $T_\lambda$ is normal for $\mu$-almost all $\lambda\in\sigma(T^n)$, and consequently, $T$ is also normal.
			
			The general case reduces to the separable one using the same technique employed in \cite[Theorem 3.1]{StankovicKubrusly25}. (See also the proof of Theorem \ref{thm:pq_hypo} below.)
		\end{proof}

		The following theorem represents a generalization of \cite[Theorem 4]{Campbell75}.
		\begin{theorem}\label{thm:pq_hypo}
			Let $T\in\mathfrak{B}(\mathcal{H})$ be an absolute-$(p,r)$-paranormal for some $p,r>0$. If $T$ is binormal, then $T$ is hyponormal.
		\end{theorem}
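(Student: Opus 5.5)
The plan is to reduce the operator inequality of Proposition~\ref{prop:lambda_char_pq} to a scalar inequality on a joint spectrum. Binormality will make $|T|$ and $|T^*|$ commute, so that inequality becomes a family of pointwise scalar conditions, which can then be optimized in $\lambda$.

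\emph{Step 1: commutativity.} Binormality says $T^*T$ commutes with $TT^*$. Since $|T|^{\alpha}$ and $|T^*|^{\beta}$ are norm limits of polynomials in $T^*T$ and $TT^*$ respectively, all such powers commute. Let $\mathcal{C}$ be the unital commutative $C^*$-algebra generated by $|T|$ and $|T^*|$. By Gelfand theory, $\mathcal{C}\cong C(K)$, where $K\subset[0,\infty)^2$ is the joint spectrum of the pair $(|T|,|T^*|)$. Under this isomorphism $|T|\mapsto s$ and $|T^*|\mapsto t$. An element of $\mathcal{C}$ is positive if and only if the corresponding function is nonnegative on $K$.

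\emph{Step 2: pointwise inequality.} Fix $\lambda>0$. The operator in \eqref{eq:lambda_char_pq} lies in $\mathcal{C}$ and corresponds to
\[
f_\lambda(s,t)=r\,t^{2r}s^{2p}-(p+r)\lambda^p t^{2r}+p\lambda^{p+r}.
\]
Proposition~\ref{prop:lambda_char_pq} therefore gives $f_\lambda(s,t)\ge 0$ for every $(s,t)\in K$ and every $\lambda>0$.

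Now fix $(s,t)\in K$ with $t>0$ and choose $\lambda=t^2$, which is the minimizer in $\lambda$. A direct computation gives
\[
f_{t^2}(s,t)=r\,t^{2r}\bigl(s^{2p}-t^{2p}\bigr).
\]
Hence $s\ge t$. If $t=0$, then $s\ge t$ holds trivially. Thus $s\ge t$ on all of $K$.

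\emph{Step 3: conclusion.} Since $s,t\ge 0$, we have $s^2-t^2\ge 0$ on $K$. Translating back through the Gelfand isomorphism gives $|T|^2\ge |T^*|^2$, that is, $T^*T\ge TT^*$. So $T$ is hyponormal.

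\emph{Main obstacle.} The only delicate point is justifying that positivity in $\mathcal{C}$ is equivalent to pointwise nonnegativity on $K$ for functions involving the non-integer powers $|T|^{2p}$ and $|T^*|^r$. This holds because the continuous functional calculus in a commutative $C^*$-algebra is an isometric $*$-isomorphism. It preserves positivity in both directions, and $s^{2p}$ and $t^r$ are continuous on $K$.

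If one prefers to avoid the joint spectrum, an alternative is to represent the commuting pair as multiplication operators via a joint spectral measure. One then applies the almost-everywhere argument used in the proof of Theorem~\ref{thm:pq_paranormal_root}, taking the chosen value $\lambda$ from a countable dense set and using continuity in $\lambda$.
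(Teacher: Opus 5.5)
Your proof is correct, and its scalar core is the same as the paper's. At each point you set the parameter equal to the value of $TT^*$ (your $\lambda=t^2$, the paper's $\zeta=y$), which collapses the inequality to $r\,t^{2r}(s^{2p}-t^{2p})\ge 0$.

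The difference is in how the operator inequality of Proposition~\ref{prop:lambda_char_pq} becomes a family of scalar inequalities. The paper first assumes $\mathcal{H}$ is separable. It writes $\mathcal{H}$ as a direct integral over the abelian von Neumann algebra generated by $T^*T$ and $TT^*$ and obtains the scalar inequality on $\mu$-almost every fiber. It then handles a general $\mathcal{H}$ by passing to the separable reducing subspaces $\mathcal{H}_y$ and comparing $\langle TT^*y,y\rangle$ with $\langle T^*Ty,y\rangle$ vector by vector.

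Your Gelfand representation of $C^*(|T|,|T^*|,I)$ as $C(K)$ needs no separability. It gives the scalar inequality at every point of $K$ rather than almost everywhere. So you avoid the separable reduction, and you also avoid a measure-theoretic detail that the paper leaves implicit and that you correctly flag in your alternative. That detail is that the exceptional null set may depend on the parameter, so one has to go through a countable dense set of parameters and use continuity.

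What the paper's route buys is uniformity with the proof of Theorem~\ref{thm:pq_paranormal_root}. There $T$ itself does not lie in a commutative algebra, and decomposing it into non-scalar fibers through the decomposable commutant genuinely needs direct integral theory. In the binormal case only commuting positive operators appear, so your continuous functional calculus argument is the more economical one.
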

		
		\begin{proof}
			Again, we first assume that the Hilbert space $\mathcal{H}$ is separable. Since $T$ is binormal, the positive operators $|T|^{2p}$ and $|T^*|^{r}$ commute. Consequently,
			inequality \eqref{eq:lambda_char_pq} reduces to
			\begin{equation}\label{eq:pr_binormal}
				r(T^*T)^p (TT^*)^r - (p+r)\zeta^p (TT^*)^r + p \zeta^{p+r} I \geq 0
				\quad \text{for all } \zeta > 0.
			\end{equation}
			
			Let $\mathcal{A}$ denote the abelian von Neumann algebra generated by the commuting positive
			operators $T^*T$ and $TT^*$. By the Spectral Theorem, the Hilbert space $\mathcal{H}$ can be
			identified with a direct integral
			\[
			\mathcal{H} = \int_\Lambda^\oplus \mathcal{H}_\lambda \, d\mu(\lambda),
			\]
			with respect to which each operator in $\mathcal{A}$ acts as a multiplication operator.
			In particular,
			\[
			T^*T = \int_\Lambda^\oplus f(\lambda) I_\lambda \, d\mu(\lambda),
			\qquad
			TT^* = \int_\Lambda^\oplus g(\lambda) I_\lambda \, d\mu(\lambda),
			\]
			for some non-negative functions $f,g \in L^\infty(X)$, where $I_\lambda$ denotes the identity
			operator on the fiber $\mathcal{H}_\lambda$, $\lambda\in \Lambda$.
			
			With respect to this representation, inequality \eqref{eq:pr_binormal} becomes
			\[
			\int_\Lambda^\oplus
			\left[
			r (f(\lambda))^p (g(\lambda))^r
			- (p+r)\zeta^p (g(\lambda))^r
			+ p \zeta^{p+r}
			\right] I_\lambda \, d\mu(\lambda)
			\geq 0
			\quad \text{for all } \zeta > 0.
			\]
			Hence, for $\mu$-almost every $\lambda\in \Lambda$, by \cite[Proposition 2.6.1]{Nielsen80}, we have
			\begin{equation}\label{eq:pr_lambda}
				r (f(\lambda))^p (g(\lambda))^r
				- (p+r)\zeta^p (g(\lambda))^r
				+ p \zeta^{p+r}
				\geq 0
				\quad \text{for all } \zeta > 0.
			\end{equation}
			
			Fix such a $\lambda\in \Lambda$ and set $x=f(\lambda)$ and $y=g(\lambda)$. Then \eqref{eq:pr_lambda} can be written as
			\begin{equation}\label{eq:xy_lambda}
				r x^p y^r - (p+r)\zeta^p y^r + p \zeta^{p+r} \geq 0
				\quad \text{for all } \zeta > 0.
			\end{equation}
			We claim that $x\geq y$. If $y=0$,
			the claim is immediate. Thus, assume that $y>0$. 
			Taking $\zeta=y$ in \eqref{eq:xy_lambda}, we obtain
			\[
			r x^p y^p - (p+r)y^{p+r} + p y^{p+r} \geq 0,
			\]
			which simplifies to
			\[
			r x^p y^p \geq r y^{p+r}.
			\]
			Since $r,y>0$, it follows immediately that $x\geq y$. Therefore,
			$f(\lambda)\geq g(\lambda)$ for $\mu$-almost every $\lambda\in \Lambda$.
			
			Finally, this yields
			\[
			T^*T
			=
			\int_\Lambda^\oplus f(\lambda) I_\lambda \, d\mu(\lambda)
			\geq
			\int_\Lambda^\oplus g(\lambda) I_\lambda \, d\mu(\lambda)
			=
			TT^*,
			\]
			showing that $T$ is hyponormal.
			
			We now turn to the general case where $\mathcal{H}$ is not necessarily separable. Let $y\in\mathcal{H}$ be arbitrary. Define $\mathcal{H}_y\subseteq\mathcal{H}$ as
			\begin{equation*}
				\mathcal{H}_y:=\bigvee\left\{{T^*}^{i_k}T^{j_k}\cdots{T^*}^{i_1}T^{j_1}y:\, (i_1,\ldots,i_k),(j_1,\ldots,j_k)\in\mathbb{N}_0^k,\, k\in\mathbb{N}\right\},
			\end{equation*}
			where $\bigvee \mathcal{S}$ denotes the closure of the linear span of vectors in a set $\mathcal{S}\subseteq\mathcal{H}$. Then, it is easy to see that $\mathcal{H}_y$ is a separable Hilbert space and it reduces $T$. Let $T_y:=T\restriction_{\mathcal{H}_y}$. Note that $T_y$ satisfies \eqref{eq:definition_star} for all $x\in\mathcal{H}_y$. Thus, $T_y$ is absolute-$(p,r)$-paranormal, and it is evident that it is also binormal. Consequently, $T_y$ is hyponormal. Therefore,
			
			\begin{equation*}
				\skal{TT^*y}{y}=\skal{T_yT^*_yy}{y}\leq \skal{T^*_yT_yy}{y}=\skal{T^*Ty}{y}.
			\end{equation*} 
			Since $y\in\mathcal{H}$ was arbitrary, we conclude that $TT^*\leq T^*T$. In other words, $T$ is hyponormal.
		\end{proof}

		\begin{example}
			Let $T$ be the normaloid matrix given in Example~\ref{ex:normaloid}. As already computed,
			\[
			T^*T
			= \begin{bmatrix}
				4 & 0 & 0 \\
				0 & 1 & 0 \\
				0 & 0 & 4
			\end{bmatrix},
			\]
			while
			\[
			TT^*
			= \begin{bmatrix}
				4 & 0 & 0 \\
				0 & 4 & 0 \\
				0 & 0 & 1
			\end{bmatrix}.
			\]
			Since both $T^*T$ and $TT^*$ are diagonal matrices, they commute, and hence $T$ is binormal. On the other hand, $T^*T \neq TT^*$, so $T$ is not normal.
			
			Recall that in finite-dimensional Hilbert spaces normality is equivalent to hyponormality. Therefore, this example shows that Theorem~\ref{thm:pq_hypo} cannot be extended to the class of normaloid operators.
		\end{example}
		
		\begin{theorem}\label{thm:lambda_squared}
			Let $T\in\mathfrak{B}(\mathcal{H})$ satisfy
			\begin{equation}\label{eq:leq_lambda}
				TT^*\leq \lambda\, T^*T
			\end{equation}
			for some $\lambda>0$. If $T$ is binormal, then
			\[
			T^n{T^*}^n\leq \lambda^{n^2}{T^*}^nT^n
			\]
			for every $n\in\mathbb{N}$.
		\end{theorem}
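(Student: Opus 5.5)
The plan is to compare both $T^n{T^*}^n$ and ${T^*}^nT^n$ with powers of the commuting positive operators $A:=T^*T$ and $B:=TT^*$, and then chain the two comparisons through \eqref{eq:leq_lambda}. Binormality means $AB=BA$, and \eqref{eq:leq_lambda} reads $B\leq\lambda A$.

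\emph{Preliminary facts.}
\begin{enumerate}[label=\textit{(\alph*)}]
\item For every $k\in\mathbb{N}$ we have $B^k\leq \lambda^k A^k$. Since $A$ and $B$ commute, they generate a commutative $C^*$-algebra. In its Gelfand (or joint spectral) representation both are nonnegative functions, and $0\leq g\leq \lambda f$ pointwise gives $g^k\leq\lambda^k f^k$. Alternatively, write $\lambda^kA^k-B^k=(\lambda A-B)\sum_{j}(\lambda A)^{k-1-j}B^j$, a product of commuting positive operators, which is positive.
\item The intertwining identities $TA=BT$ and $T^*B=AT^*$ both follow from $TT^*T=TT^*T$. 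Consequently $TA^{k}=B^{k}T$ and $T^*B^{k}=A^{k}T^*$ for all $k$.
\item Congruence preserves order: $X\leq Y$ implies $SXS^*\leq SYS^*$.
\end{enumerate}

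\emph{Step 1 (upper bound for $T^n{T^*}^n$).} By induction, $T^n{T^*}^n\leq \lambda^{n(n-1)/2}B^n$. The case $n=1$ is trivial. Assuming the bound for $n-1$, we get
\[
T^n{T^*}^n=T\,(T^{n-1}{T^*}^{n-1})\,T^*\leq \lambda^{(n-1)(n-2)/2}\,T B^{n-1}T^*.
\]
By (a) and (c), $TB^{n-1}T^*\leq\lambda^{n-1}TA^{n-1}T^*$. By (b), $TA^{n-1}T^*=B^{n-1}TT^*=B^n$. The exponents add to $\tfrac{(n-1)(n-2)}{2}+(n-1)=\tfrac{n(n-1)}{2}$, which closes the induction.

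\emph{Step 2 (lower bound for ${T^*}^nT^n$).} Symmetrically, ${T^*}^nT^n\geq\lambda^{-n(n-1)/2}A^n$. For the inductive step we write
\[
{T^*}^nT^n=T^*({T^*}^{n-1}T^{n-1})T\geq \lambda^{-(n-1)(n-2)/2}\,T^*A^{n-1}T.
\]
From (a), $A^{n-1}\geq\lambda^{-(n-1)}B^{n-1}$, so by (c) $T^*A^{n-1}T\geq \lambda^{-(n-1)}T^*B^{n-1}T$. By (b), $T^*B^{n-1}T=A^{n-1}T^*T=A^n$.

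\emph{Step 3 (chaining).} Combining Step 1, fact (a) with $k=n$, and Step 2 gives
\[
T^n{T^*}^n\leq\lambda^{n(n-1)/2}B^n\leq\lambda^{n(n-1)/2+n}A^n\leq\lambda^{n(n-1)+n}\,{T^*}^nT^n=\lambda^{n^2}{T^*}^nT^n .
\]

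The only genuinely delicate point is fact (a): order is not preserved under powers in general. This is exactly where binormality is needed, and the commutative functional calculus handles it. Everything else is routine manipulation of the intertwining relations.
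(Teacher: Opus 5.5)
Your proof is correct and is essentially the paper's argument. The paper also derives $(TT^*)^k\le\lambda^k(T^*T)^k$ from binormality via functional calculus, obtains $(T^*T)^n\le\lambda^{n(n-1)/2}{T^*}^nT^n$ and $T^n{T^*}^n\le\lambda^{n(n-1)/2}(TT^*)^n$ by repeatedly conjugating with $T$ or $T^*$ and using $T(T^*T)=(TT^*)T$, and then chains these exactly as in your Step 3; the differences are only that you organize the telescoping chain as an induction and write out in full the bound for $T^n{T^*}^n$ that the paper dismisses as analogous.
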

		
		\begin{proof}
			Fix $n\in\mathbb{N}$. Since $T$ is binormal, the operators $TT^*$ and $T^*T$ commute. Hence, by functional calculus, \eqref{eq:leq_lambda} implies
			\begin{equation}\label{eq:leq_lambda_k}
				(TT^*)^k\leq \lambda^k (T^*T)^k
			\end{equation}
			for all $k\in\mathbb{N}$.
			
			Furthermore,  successive  application of \eqref{eq:leq_lambda_k} yields
			\begin{align*}
				(T^*T)^n
				&= T^*(TT^*)^{\,n-1}T \\
				&\le \lambda^{n-1} T^*(T^*T)^{\,n-1}T \\
				&= \lambda^{n-1} {T^*}^2 (TT^*)^{\,n-2} T^2 \\
				&\le \lambda^{(n-1)+(n-2)} {T^*}^2 (T^*T)^{\,n-2} T^2 \\
				&\;\;\vdots \\
				&\le \lambda^{(n-1)+(n-2)+\cdots+1} {T^*}^n T^n \\
				&= \lambda^{\frac{n(n-1)}{2}} {T^*}^n T^n .
			\end{align*}
			In analogous manner, we can derive
			\[
			(TT^*)^n \ge \lambda^{-\frac{n(n-1)}{2}} T^n {T^*}^n .
			\]
			
			Combining these inequalities with \eqref{eq:leq_lambda_k} for $k=n$, we conclude
			\begin{align*}
				T^n{T^*}^n
				&\le \lambda^{\frac{n(n-1)}{2}} (TT^*)^n \\
				&\le \lambda^{\frac{n(n-1)}{2}} \lambda^n (T^*T)^n \\
				&\le \lambda^{n^2} {T^*}^n T^n ,
			\end{align*}
			completing the proof.
		\end{proof}

		The preceding result yields two important corollaries regarding the preservation of operator properties under powers.
		\begin{corollary}\label{cor:bi_posi}
			Let $T\in\mathfrak{B}(\mathcal{H})$ be a posinormal operator. If $T$ is binormal, then $T^n$ is posinormal for all $n\in\mathbb{N}$.
		\end{corollary}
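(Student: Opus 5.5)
The plan is to reduce posinormality to an operator inequality of the form $TT^*\le\lambda\,T^*T$ and then invoke Theorem~\ref{thm:lambda_squared}. The characterization of posinormal operators I will use is the one from \cite{Rhaly94}. It rests on Douglas' range inclusion lemma. An operator $S$ is posinormal if and only if $\mathcal{R}(S)\subseteq\mathcal{R}(S^*)$, and this holds if and only if $SS^*\le\lambda\, S^*S$ for some $\lambda>0$.

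For one direction, write $SS^*=S^*PS$ with $P\ge 0$. Then $SS^*\le\|P\|\,S^*S$, since $S^*PS\le\|P\|S^*S$. This direction is immediate.

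For the converse, suppose $SS^*\le \lambda S^*S$. Then $\|S^*x\|^2\le\lambda\|Sx\|^2$ for all $x$. Douglas' lemma then gives a contraction-type operator $C$ with $S^*=C^*(\sqrt{\lambda}\,\cdot)\,S$, or more directly $S=S^*D$ for some bounded $D$. From this, $SS^*=S^*DD^*S$, so $P=DD^*\ge0$ works.

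If I only want to cite rather than reprove, I will quote \cite[Theorem 2.1]{Rhaly94}, or the equivalent statement in \cite{KubruslyDuggal07}, for the equivalence ``posinormal $\iff TT^*\le\lambda T^*T$ for some $\lambda>0$''.

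With this in hand, the argument is short. Since $T$ is posinormal, there is $\lambda>0$ with $TT^*\le\lambda T^*T$, which is exactly \eqref{eq:leq_lambda}. Since $T$ is binormal, Theorem~\ref{thm:lambda_squared} applies and gives $T^n{T^*}^n\le\lambda^{n^2}{T^*}^nT^n$ for every $n\in\mathbb{N}$. Now $(T^n)(T^n)^*=T^n{T^*}^n$ and $(T^n)^*T^n={T^*}^nT^n$. So $T^n$ satisfies the same type of inequality with constant $\lambda^{n^2}>0$, and by the characterization $T^n$ is posinormal.

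The only real point of care is the converse direction of the characterization, namely producing a positive $P$ from the inequality. I expect this to be the main, though mild, obstacle. I will handle it via Douglas' factorization lemma applied to $\|{T^n}^*x\|\le\lambda^{n^2/2}\|T^nx\|$. That lemma yields a bounded $D$ with ${T^n}^*$-range data giving $T^n=(T^n)^*D$. The candidate is then $P=DD^*$, since $T^n{T^*}^n=(T^n)^*DD^*T^n$.
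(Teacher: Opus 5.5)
Your proposal is correct and follows the paper's own argument. The paper likewise uses \cite[Theorem 2.1]{Rhaly94} to rewrite posinormality as $TT^*\le\lambda T^*T$, then applies Theorem~\ref{thm:lambda_squared} to get the corresponding inequality for $T^n$; your Douglas-lemma derivation of Rhaly's characterization (factoring $T^n=(T^n)^*D$ and taking $P=DD^*$) is sound and only makes the cited equivalence self-contained.
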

		\begin{proof}
			The assertion follows directly from Theorem~\ref{thm:lambda_squared} in conjunction with \cite[Theorem 2.1]{Rhaly94}, which says that \eqref{eq:leq_lambda} is an equivalent definition of posinormal operators.
		\end{proof}
		
		\begin{corollary}\label{cor:bi_hypo}
			Let $T\in\mathfrak{B}(\mathcal{H})$ be a hyponormal operator. If $T$ is binormal, then $T^n$ is hyponormal for all $n\in\mathbb{N}$.
		\end{corollary}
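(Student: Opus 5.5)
The plan is to apply Theorem~\ref{thm:lambda_squared} with $\lambda=1$. Hyponormality of $T$ says precisely that $TT^*\leq T^*T$, which is inequality \eqref{eq:leq_lambda} with $\lambda=1$. Since $T$ is assumed binormal, Theorem~\ref{thm:lambda_squared} applies and yields
\[
T^n{T^*}^n\leq 1^{n^2}\,{T^*}^nT^n={T^*}^nT^n
\]
for every $n\in\mathbb{N}$. Because $(T^n)^*={T^*}^n$, this reads $T^n(T^n)^*\leq (T^n)^*T^n$, which is exactly the statement that $T^n$ is hyponormal.

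There is essentially no obstacle here. The only point worth a moment's care is that the proof of Theorem~\ref{thm:lambda_squared} needs \eqref{eq:leq_lambda_k}, namely $(TT^*)^k\leq\lambda^k(T^*T)^k$. This relies on binormality: $TT^*$ and $T^*T$ commute, so the inequality between commuting positive operators can be raised to the $k$-th power via the joint functional calculus. That argument is already carried out in the theorem, so we may simply invoke it.

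Alternatively, one could mirror the phrasing of Corollary~\ref{cor:bi_posi}. Hyponormal operators are posinormal (with $\lambda=1$), and the constant $\lambda^{n^2}$ stays equal to $1$. Either way the corollary follows immediately.
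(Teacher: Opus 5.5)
Your proof is correct and follows the paper's intended route: the corollary is presented as an immediate consequence of Theorem~\ref{thm:lambda_squared} with $\lambda=1$, so that $TT^*\leq T^*T$ yields $T^n{T^*}^n\leq {T^*}^nT^n$. You are also right that binormality is what justifies $(TT^*)^k\leq (T^*T)^k$ inside that theorem.
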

		
		It is worth noting that Corollary~\ref{cor:bi_posi} provides a generalization of \cite[Theorem 3]{KubruslyVieieraZanni16}, while   Corollary~\ref{cor:bi_hypo} recovers  \cite[Theorem 3]{Campbell75}.
		
		\begin{theorem}
			Let $p_1, r_1, p_2, r_2>0$.
			If $T\in\mathfrak{B}(\mathcal{H})$ is binormal and absolute-$(p_1,r_1)$-paranormal, and there exists $n\in\mathbb{N}$ such that ${T^*}^n$ is absolute-$(p_2,r_2)$-paranormal, then $T$ is normal. 
		\end{theorem}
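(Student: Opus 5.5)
By Theorem~\ref{thm:pq_hypo}, the hypotheses that $T$ is binormal and absolute-$(p_1,r_1)$-paranormal already give that $T$ is hyponormal, i.e. $TT^*\le T^*T$. Since $T$ is also binormal, Corollary~\ref{cor:bi_hypo} shows that $T^n$ is hyponormal, so
\[
T^n{T^*}^n\le {T^*}^nT^n .
\]
Equivalently, $S:={T^*}^n$ is co-hyponormal: $S^*S\le SS^*$. On the other hand, $S$ is absolute-$(p_2,r_2)$-paranormal by assumption, hence normaloid by Theorem~\ref{thm:pr_normaloid}. The plan is to show that an absolute-$(p,r)$-paranormal operator whose adjoint is hyponormal must be normal, then apply this to $S$, and finally pass from normality of $T^n$ back to $T$.

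For the middle step I would use the cited \cite[Theorem 2]{YamazakiYanagida03}, which was already invoked in the proof of Theorem~\ref{thm:u_self_adjoint_pq}. It says that if both an operator and its adjoint are absolute-$(p,r)$-paranormal, then the operator is normal. Since $S^*=T^n$ is hyponormal, it is paranormal, i.e. absolute-$(1,1)$-paranormal. Set $p=\max\{1,p_2\}$ and $r=\max\{1,r_2\}$. By the monotonicity in Theorem~\ref{thm:monotonicity}, both $S$ and $S^*$ are then absolute-$(p,r)$-paranormal with the same pair $(p,r)$. Hence $S$ is normal, and therefore $T^n=S^*$ is normal.

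Finally, $T$ is absolute-$(p_1,r_1)$-paranormal and $T^n$ is normal, so Theorem~\ref{thm:pq_paranormal_root} yields that $T$ is normal.

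The main obstacle is the middle step: making sure the cited result from \cite{YamazakiYanagida03} really needs both operators in the same class with the same parameters. This is exactly why the monotonicity step is there to equalize the parameters.

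If that citation were unavailable, I would instead argue directly that a co-hyponormal absolute-$(p,r)$-paranormal operator $S$ is normal. The steps would be:
\begin{enumerate}
\item Note that $\mathcal{N}(S)\subseteq\mathcal{N}(S^*)$, because $S^*$ is hyponormal.
\item Combine the inequality of Proposition~\ref{prop:lambda_char_pq} with the reverse inequality $|S|^2\le|S^*|^2$ to force $|S|=|S^*|$ on the ranges.
\end{enumerate}
This direct route seems more delicate. A further alternative is to note that $S$ is itself binormal, since $T^n$ inherits binormality only with difficulty, so that route is less attractive. Either way, the reduction to Theorem~\ref{thm:pq_paranormal_root} is the structure I would follow.
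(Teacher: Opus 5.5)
Your proposal is correct and follows the paper's proof: Theorem~\ref{thm:pq_hypo} and Corollary~\ref{cor:bi_hypo} make $T^n$ hyponormal (hence paranormal), \cite[Theorem 2]{YamazakiYanagida03} then makes $T^n$ normal, and Theorem~\ref{thm:pq_paranormal_root} gives normality of $T$. Your use of Theorem~\ref{thm:monotonicity} to place $T^n$ and ${T^*}^n$ in the common class absolute-$(\max\{1,p_2\},\max\{1,r_2\})$-paranormal is a sound precaution the paper leaves implicit; in your unused fallback, note that the kernel inclusion is reversed and should read $\mathcal{N}(S^*)\subseteq\mathcal{N}(S)$, since $S^*$ is hyponormal.
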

		
		\begin{proof}
			By Theorem~\ref{thm:pq_hypo}, operator $T$ is hyponormal. It follows from Corollary \ref{cor:bi_hypo} that $T^k$ is hyponormal for every $k\in\mathbb{N}$. In particular, $T^n$ is hyponormal and hence paranormal.
			
			Applying \cite[Theorem~2]{YamazakiYanagida03}, we conclude that $T^n$ is normal. Consequently, Theorem~\ref{thm:pq_paranormal_root} yields that $T$ itself is normal.
		\end{proof}
		
		In order to prove our next result, we first establish the following auxiliary lemma, which is most likely well known.
		
		\begin{lemma}\label{lem:fundamendal_alpha}
			Let $T\in\mathfrak{B}(\mathcal{H})$. Then
			\[
			T|T|^\alpha = |T^*|^\alpha T
			\]
			for every $\alpha>0$.
		\end{lemma}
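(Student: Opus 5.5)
The plan is to start from the polynomial identity $T(T^*T)^k = (TT^*)^kT$ for every $k\in\mathbb{N}\cup\{0\}$. This is immediate by associativity:
\[
T(T^*T)^k = (TT^*)^kT .
\]
By linearity it gives $T\,p(T^*T) = p(TT^*)\,T$ for every polynomial $p$.

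Next fix $\alpha>0$ and consider the continuous function $f(t)=t^{\alpha/2}$ on the interval $[0,\|T\|^2]$. This interval contains both $\sigma(T^*T)$ and $\sigma(TT^*)$. By the Weierstrass approximation theorem there are polynomials $p_m$ with $p_m\to f$ uniformly on this interval. The continuous functional calculus then gives
\[
p_m(T^*T)\to f(T^*T)=|T|^\alpha, \qquad p_m(TT^*)\to f(TT^*)=|T^*|^\alpha
\]
in operator norm.

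Passing to the limit in $T\,p_m(T^*T)=p_m(TT^*)\,T$ is legitimate because left and right multiplication by the fixed bounded operator $T$ are norm-continuous. This yields $T|T|^\alpha=|T^*|^\alpha T$.

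An alternative route uses Theorem~\ref{thm:polar_q}(ii): $|T^*|^\alpha T = U|T|^\alpha U^*U|T|$. Since $U^*U$ is the projection onto $\overline{\mathcal{R}(|T|)}$ and commutes with $|T|^\alpha$, this reduces to $U|T|^{\alpha}|T| = U|T|\,|T|^\alpha = T|T|^\alpha$. Either argument suffices.

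The only point requiring care is the approximation step. One must check that the two spectra lie in a common compact set on which the uniform approximation is taken, and this holds because $\|T^*T\|=\|TT^*\|=\|T\|^2$. I do not expect any real obstacle.
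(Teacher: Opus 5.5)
Your proof is correct and is essentially the paper's argument unpacked: the paper rewrites $T(T^*T)=(TT^*)T$ as the statement that $\begin{bmatrix}0&T\\0&0\end{bmatrix}$ commutes with $\begin{bmatrix}|T^*|^2&0\\0&|T|^2\end{bmatrix}$, and then uses the fact that an operator commuting with a positive operator commutes with its fractional powers, which is exactly the polynomial-approximation step you carry out directly on the intertwining relation. Your alternative route via Theorem~\ref{thm:polar_q}(ii) and $U^*U|T|=|T|$ is also valid, though it just moves the functional-calculus work into that cited result.
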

		
		\begin{proof}
			Let $\alpha>0$ be arbitrary. Since $T(T^*T)=(TT^*)T$, we obtain
			\[
			\begin{bmatrix}
				0 & T \\
				0 & 0
			\end{bmatrix}
			\begin{bmatrix}
				|T^*|^2 & 0 \\
				0 & |T|^2
			\end{bmatrix}
			=
			\begin{bmatrix}
				|T^*|^2 & 0 \\
				0 & |T|^2
			\end{bmatrix}
			\begin{bmatrix}
				0 & T \\
				0 & 0
			\end{bmatrix}.
			\]
			Thus, the operator
			\[
			\begin{bmatrix}
				0 & T \\
				0 & 0
			\end{bmatrix}
			\]
			commutes with the positive operator
			\[
			\begin{bmatrix}
				|T^*|^2 & 0 \\
				0 & |T|^2
			\end{bmatrix},
			\]
			and hence also with its fractional power
			\[
			\begin{bmatrix}
				|T^*|^2 & 0 \\
				0 & |T|^2
			\end{bmatrix}^{\alpha/2}.
			\]
			Consequently,
			\[
			\begin{bmatrix}
				0 & T \\
				0 & 0
			\end{bmatrix}
			\begin{bmatrix}
				|T^*|^\alpha & 0 \\
				0 & |T|^\alpha
			\end{bmatrix}
			=
			\begin{bmatrix}
				|T^*|^\alpha & 0 \\
				0 & |T|^\alpha
			\end{bmatrix}
			\begin{bmatrix}
				0 & T \\
				0 & 0
			\end{bmatrix},
			\]
			which yields $T|T|^\alpha = |T^*|^\alpha T$.
		\end{proof}
		
		\begin{theorem}\label{thm:compact_square_root}
			Let $T\in\mathfrak{B}(\mathcal{H})$ be an absolute-$(p,r)$-paranormal operator for some $p,r>0$. If $T^2$ is compact, then $T$ is compact.
		\end{theorem}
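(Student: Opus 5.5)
The plan is to show that the positive operator $|T|^r$ is compact, after first enlarging the parameters so that $T^2$ appears explicitly in the defining inequality of Proposition~\ref{prop:def_U}. Compactness of $T = U|T|$ then follows.

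\emph{Step 1: reduce to $p,r\geq 1$.} By Theorem~\ref{thm:monotonicity}, $T$ is absolute-$(p',r')$-paranormal for all $p'\geq p$ and $r'\geq r$. So without loss of generality we assume $p\geq 1$ and $r\geq 1$.

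\emph{Step 2: express the left-hand side of \eqref{eq:definition_U} through $T^2$.} Since $U^*U$ is the orthogonal projection onto $\overline{\mathcal{R}(|T|)}$, we have $U^*U|T| = |T|$. Hence
\[
|T|U|T| = U^*U|T|U|T| = U^*T^2 .
\]
Consequently, for $p,r\geq 1$,
\[
K := |T|^pU|T|^r = |T|^{p-1}\bigl(|T|U|T|\bigr)|T|^{r-1} = |T|^{p-1}U^*T^2|T|^{r-1}.
\]
Here $|T|^0$ is read as $I$. Since $T^2$ is compact and the compact operators form an ideal, $K$ is compact.

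\emph{Step 3: transfer compactness to $|T|^r$.} Proposition~\ref{prop:def_U} gives
\[
\||T|^r x\|^{p+r} \leq \|Kx\|^r
\]
for every unit vector $x$. Let $(x_k)$ be a sequence of unit vectors converging weakly to $0$. Compactness of $K$ gives $\|Kx_k\|\to 0$, so $\||T|^rx_k\|\to 0$. By homogeneity the same conclusion holds for any bounded weakly null sequence. Hence $|T|^r$ maps weakly null sequences to norm null sequences, and is therefore compact.

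\emph{Step 4: conclude.} For a compact positive operator $A$, the spectral theorem shows that $A^{s}$ is compact for every $s>0$, since $A$ is diagonalizable with eigenvalues tending to $0$. Applying this with $A=|T|^r$ and $s=1/r$ shows that $|T|$ is compact. Therefore $T=U|T|$ is compact.

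The main step is the identity in Step 2. It relies only on $U^*U|T|=|T|$ and on the monotonicity reduction to $p,r\geq 1$, so I expect no real obstacle. The remaining care is in the functional-calculus facts used in Steps 3 and 4.
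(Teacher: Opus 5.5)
Your proof is correct. It follows the same overall strategy as the paper: enlarge the exponents by Theorem~\ref{thm:monotonicity}, factor the left-hand side of \eqref{eq:definition_U} through $T^2$, and push weakly null sequences through the resulting inequality. The two technical ingredients, however, differ.

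The first difference is the factorization through $T^2$. The paper symmetrizes to $(s,s)$ with $s=\max\{p,r,1\}$. It gets $U|T|^sU|T|^s=|T^*|^{s-1}T^2|T|^{s-1}$ from Lemma~\ref{lem:fundamendal_alpha}, the intertwining relation $T|T|^\alpha=|T^*|^\alpha T$. It then silently drops the leading $U$, because $U$ is isometric on $\overline{\mathcal{R}(|T|)}$. Your identity $|T|U|T|=U^*T^2$ needs only $U^*U|T|=|T|$. It works directly for unequal $p,r\geq 1$ and dispenses with that lemma entirely.

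The second difference is the return from a power of $|T|$ to $T$. The paper uses the H\"older--McCarthy inequality to get $\|Tx\|^{2s}\leq\||T|^sx\|^2$ for unit $x$. This yields the single vector inequality $\|Tx\|^{2s}\leq\||T|^sU|T|^sx\|\,\|x\|^{2s-1}$, which gives compactness of $T$ immediately. You instead show that $|T|^r$ is compact and then take the $1/r$-th root using the spectral theorem for compact positive operators, which does the same job rigorously.

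Your route is slightly more elementary. The paper's route pays off later: the identity \eqref{eq:trans_equiv} and inequality \eqref{eq:power_s} are reused almost verbatim in the proof of Theorem~\ref{thm:pq_asc} on the ascent.
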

		
		\begin{proof}
			Set $s=\max\{p,r,1\}$ and let $T=U|T|$ be the polar decomposition of $T$.  By Theorem~\ref{thm:monotonicity}, the operator $T$ is absolute-$(s,s)$-paranormal.
			Therefore, by \eqref{eq:definition_U}, we have
			\begin{equation}\label{eq:power_s}
				\|\,|T|^s x\|^2 \leq \|\,|T|^s U |T|^s x\|, \qquad \|x\|=1.
			\end{equation}
			Since $s\geq 1$, Theorem~\ref{thm:holder_mccarthy_ineq} yields
			\[
			\|Tx\|^{2s}
			=\langle |T|^2 x,x\rangle^s
			\leq \langle |T|^{2s}x,x\rangle
			=\||T|^s x\|^2.
			\]
			Combining this inequality with \eqref{eq:power_s}, we obtain
			\[
			\|Tx\|^{2s} \leq \|\,|T|^s U |T|^s x\|, \qquad \|x\|=1,
			\]
			or equivalently,
			\begin{equation}\label{eq:power_s_all_x}
				\|Tx\|^{2s} \leq \|\,|T|^s U |T|^s x\|\,\|x\|^{2s-1},
			\end{equation}
			for all $x\in\mathcal{H}$. 
			
			Let $\{x_n\}_{n\in\mathbb{N}}$ be a sequence in $\mathcal{H}$ converging weakly to $0$. Without loss of generality, we may assume that $\|x_n\|\leq 1$ for all $n\in\mathbb{N}$. Since $T^2$ is compact, the operator $|T^*|^{s-1}T^2|T|^{s-1}$ is also compact. On the other hand, by Lemma~\ref{lem:fundamendal_alpha}, we have
			
			\begin{equation}\label{eq:trans_equiv}
				U|T|^s U|T|^s
				=
				T|T|^{s-1}T|T|^{s-1}
				=
				|T^*|^{s-1}T^2|T|^{s-1}.
			\end{equation}
			Therefore,
			\[
			\|\,|T|^s U|T|^s x_n\|
			=
			\|\,|T^*|^{s-1}T^2|T|^{s-1}x_n\|
			\rightarrow 0,
			\quad n\to\infty.
			\]
			Using inequality~\eqref{eq:power_s_all_x}, we obtain
			\[
			\|Tx_n\|^{2s}
			\leq
			\|\,|T|^s U|T|^s x_n\|\,\|x_n\|^{2s-1}
			\leq
			\|\,|T|^s U|T|^s x_n\|,
			\quad n\in\mathbb{N},
			\]
			and hence
			\[
			\|Tx_n\|\rightarrow 0,
			\quad n\to\infty.
			\]
			This shows that $T$ is compact.
		\end{proof}
		
		The following theorem is established by utilizing the operator transform from \cite{FujiiJungLee00}.
		
		\begin{theorem}\label{thm:compact_normal}
			Let $T\in\mathfrak{B}(\mathcal{H})$ be an absolute-$(p,r)$-paranormal operator for some $p,r>0$. If $T$ is compact, then $T$ is normal.
		\end{theorem}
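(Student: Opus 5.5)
The plan is to show that a compact absolute-$(p,r)$-paranormal operator has a nonzero eigenvalue of maximal modulus whose eigenspace reduces $T$. We then peel off these eigenspaces one at a time and conclude that $T$ is a diagonal operator with respect to an orthonormal basis of $\overline{\mathcal{R}(T)}$, plus zero on the complement. This is the standard Berberian-type strategy for compact normaloid operators with reducing eigenspaces. The hint that the authors use the transform of \cite{FujiiJungLee00} suggests an alternative route: pass to a generalized Aluthge-type transform $\widetilde{T}_{s,t}=|T|^{s}U|T|^{t}$, which is again compact, and argue that it lies in a better class. The eigenvalue route, however, seems more self-contained.

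\textbf{Step 1 (normaloid and a maximal eigenvalue).} If $T=0$ there is nothing to prove. Otherwise, Theorem~\ref{thm:pr_normaloid} gives $r(T)=\|T\|>0$. Since $T$ is compact, every nonzero spectral point is an eigenvalue, so there is $\mu$ with $|\mu|=\|T\|$ and $Tx=\mu x$ for some $x\neq 0$.

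\textbf{Step 2 (eigenvectors for $|\mu|=\|T\|$ are eigenvectors of $T^*$).} This is elementary and does not need the paranormal structure: if $\|T\|=|\mu|$ and $Tx=\mu x$, then
\[
\|T^*x-\overline{\mu}x\|^2=\|T^*x\|^2-2|\mu|^2\|x\|^2+|\mu|^2\|x\|^2\leq 0 .
\]
Hence $\mathcal{N}(T-\mu)$ reduces $T$.

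\textbf{Step 3 (heredity and induction).} Write $T=\mu I\oplus T_1$ on $\mathcal{N}(T-\mu)\oplus\mathcal{N}(T-\mu)^\perp$. Because $|T|$, $|T^*|$ and $U$ all split along a reducing subspace, the defining inequality \eqref{eq:definition_star} restricts. So $T_1$ is again compact and absolute-$(p,r)$-paranormal, hence normaloid by Theorem~\ref{thm:pr_normaloid}. Iterating, we obtain an at most countable family of mutually orthogonal reducing eigenspaces $\mathcal{M}_k=\mathcal{N}(T-\mu_k)$ with $|\mu_1|\geq|\mu_2|\geq\cdots$. Their moduli are the successive norms of the remaining parts and tend to $0$, since $T$ is compact and each $\mathcal{M}_k$ is finite-dimensional.

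\textbf{Step 4 (the remainder).} Let $\mathcal{M}=\bigvee_k\mathcal{M}_k$. This subspace reduces $T$, and $T|_{\mathcal{M}}$ is diagonal, hence normal. The part $T_\infty=T|_{\mathcal{M}^\perp}$ is compact and absolute-$(p,r)$-paranormal, so it is normaloid. Its norm is at most $|\mu_k|$ for every $k$, or else it has a nonzero eigenvalue, which contradicts maximality at some stage. Hence $\|T_\infty\|=r(T_\infty)=0$, so $T_\infty=0$, and therefore $T$ is normal. The main obstacle is bookkeeping rather than a genuine difficulty. It consists of verifying that absolute-$(p,r)$-paranormality passes to reducing parts, including the fact that $|T^*|^r$ and $|T|^p$ restrict correctly, and handling the limit argument in Step 4. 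If the induction becomes messy, I would instead apply Steps 1--2 directly to $T_\infty$ and argue by contradiction.
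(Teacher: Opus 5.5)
Your proof is correct, but it takes a different route from the paper's.

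\textbf{The paper's route.} It passes to $s=\max\{p,r,1\}$ via Theorem~\ref{thm:monotonicity} and works with the transform $\overline{T}(s)=U|T|^s=T|T|^{s-1}$. By \eqref{eq:definition_U} with $p=r=s$, one gets $\|\overline{T}(s)x\|^2=\||T|^sx\|^2\le\||T|^sU|T|^sx\|=\|\overline{T}(s)^2x\|$ for unit $x$, so $\overline{T}(s)$ is paranormal. It is compact because $s\ge 1$, hence normal by \cite[Theorem 2]{IstratescuSaitoYoshino66}. The identities $\overline{T}(s)^*\overline{T}(s)=|T|^{2s}$ and $\overline{T}(s)\overline{T}(s)^*=U|T|^{2s}U^*=|T^*|^{2s}$ then force $|T|=|T^*|$.

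\textbf{Your route.} You run the classical eigenvalue-peeling argument directly on $T$. It uses only normaloidness (Theorem~\ref{thm:pr_normaloid}) and the fact that absolute-$(p,r)$-paranormality passes to reducing parts, which is valid since $|T|^p$ and $|T^*|^r$ split as direct sums. In effect you reprove the cited compact-paranormal theorem in a form that applies verbatim here. Your argument also shows more generally that every compact member of a normaloid class closed under restriction to reducing subspaces is normal. The paper's proof is much shorter, but it depends on the monotonicity theorem, on the transform being paranormal, and on the external result.

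\textbf{Two small corrections to your bookkeeping.} First, $|\mu_k|\to 0$ does not follow from the $\mathcal{M}_k$ being finite-dimensional. It follows because the $\mu_k$ are pairwise distinct eigenvalues of the compact operator $T$, so they can only accumulate at $0$. They are distinct because, if $\mu_k=\mu_j$ with $j<k$, an eigenvector of the remainder for $\mu_k$ would lie in $\mathcal{M}_j\cap\mathcal{M}_j^\perp=\{0\}$. Alternatively, $\|Te_k\|=|\mu_k|$ for orthonormal, hence weakly null, vectors $e_k\in\mathcal{M}_k$. Second, in Step 4 you do not need normaloidness of $T_\infty$. Since $T_\infty$ is a direct summand of every remainder, $\|T_\infty\|\le|\mu_k|\to 0$ directly, and if the process stops after finitely many steps the remainder is already $0$.
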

		
		\begin{proof}
			Again, set $s=\max\{p,r,1\}$, and define $\overline{T}(s):=U|T|^s=T|T|^{s-1}$. Since $T$ is compact, it follows that $\overline{T}(s)$ is also compact. Moreover, since $T$ is absolute-$(s,s)$-paranormal, it is straightforward to check that $\overline{T}(s)$ is a paranormal operator. By \cite[Theorem 2]{IstratescuSaitoYoshino66}, we conclude that $\overline{T}(s)$ is normal.
			
			Now, by Theorem \ref{thm:polar_q}, we have
			\[
			{\overline{T}(s)}^*\,\overline{T}(s)
			=|T|^sU^*U|T|^s
			=|T|^{2s},
			\]
			and
			\[
			\overline{T}(s)\,{\overline{T}(s)}^*
			=U|T|^{2s}U^*
			=|T^*|^{2s}.
			\]
			Thus, $|T|^{2s}=|T^*|^{2s}$, which implies that $T$ is normal.
		\end{proof}

        \begin{remark}
            Since there exist normaloid matrices that are not normal, the condition of being absolute-$(p,r)$-paranormal in the preceding theorem cannot be relaxed to normaloidness.
        \end{remark}
		
		\begin{corollary}\label{cor:compact_root_normal}
			Let $T\in\mathfrak{B}(\mathcal{H})$ be an absolute-$(p,r)$-paranormal operator for some $p,r>0$. If $T^2$ is compact, then $T$ is normal.
		\end{corollary}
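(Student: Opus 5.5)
The corollary follows by chaining the two preceding theorems. Let $T$ be absolute-$(p,r)$-paranormal for some $p,r>0$ and suppose $T^2$ is compact.

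First, apply Theorem~\ref{thm:compact_square_root}. Its hypotheses are exactly ours, so $T$ itself is compact.

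Next, $T$ is now a compact absolute-$(p,r)$-paranormal operator. Theorem~\ref{thm:compact_normal} applies directly and gives that $T$ is normal.

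No separate argument is needed. The only substantive step, showing that compactness of $T^2$ forces compactness of $T$, is already handled in Theorem~\ref{thm:compact_square_root}. That proof controls $\|Tx_n\|^{2s}$ by $\|\,|T^*|^{s-1}T^2|T|^{s-1}x_n\|$ for weakly null sequences $(x_n)$.

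As a consistency check, the conclusion also says that $T$ is a compact normal operator, as announced in the abstract. This follows because both properties, compactness and normality, have been established along the way.
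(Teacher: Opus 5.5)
Your proposal is correct and matches the paper's proof exactly: it applies Theorem~\ref{thm:compact_square_root} to get that $T$ is compact, then Theorem~\ref{thm:compact_normal} to get that $T$ is normal. Your description of the key estimate in Theorem~\ref{thm:compact_square_root}, bounding $\|Tx_n\|^{2s}$ by $\|\,|T^*|^{s-1}T^2|T|^{s-1}x_n\|$ for weakly null $(x_n)$, also agrees with the paper.
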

		
		\begin{proof}
			The conclusion follows immediately from Theorems \ref{thm:compact_square_root} and \ref{thm:compact_normal}.
		\end{proof}
		
		We conclude the section with the following question of interest.
		
		\begin{question}\label{quest:compact_root}
			Let $T\in\mathfrak{B}(\mathcal{H})$ be an absolute-$(p,r)$-paranormal operator for some $p,r>0$. If there exists $n\in\mathbb{N}$ such that $T^n$ is compact, does it follow that $T$ is compact?
		\end{question}
		
		Of course, an affirmative answer to Question \ref{quest:compact_root} would yield a generalization of Corollary \ref{cor:compact_root_normal} to arbitrary positive integer powers.

		\bigskip 
		\section{Characterizations of quasinormal partial isometries}\label{sec:partial_iso}
		\bigskip 
		
		It is a classical result in operator theory that for the class of partial isometries, quasinormality and hyponormality are equivalent. To see this, let $V \in \mathfrak{B}(\mathcal{H})$ be a hyponormal partial isometry. Then $VV^* \leq V^*V$. Since both $VV^*$ and $V^*V$ are orthogonal projections, this inequality is equivalent to the inclusion $\mathcal{R}(V) \subseteq \mathcal{N}(V)^\perp$. This inclusion implies that the subspace $\mathcal{N}(V)^\perp$ is invariant under $V$, which in turn ensures that $V$ commutes with the projection $V^*V$. Consequently, with respect to the orthogonal decomposition $\mathcal{H} = \mathcal{N}(V)^\perp \oplus \mathcal{N}(V)$, the operator $V$ admits the following block matrix representation:
		\begin{equation}\label{eq:iso_plus_zero}
			V = \begin{bmatrix}
				U & 0 \\
				0 & 0
			\end{bmatrix},
		\end{equation}
		where $U: \mathcal{N}(V)^\perp \to \mathcal{N}(V)^\perp$ is an isometry.
		
		Furthermore, Furuta demonstrated in \cite{Furuta78} that hyponormality can be replaced by the weaker condition of paranormality in this context. A natural inquiry is whether this equivalence persists for broader classes within the normaloid hierarchy. The following theorem shows that this is indeed the case for absolute-$(p,r)$-paranormal operators.

		\begin{theorem}\label{thm:quasi_iso_char}
			Let $V\in\mathfrak{B}(\mathcal{H})$ be a partial isometry. The following conditions are equivalent:
			\begin{enumerate}[label=\textit{(\roman*)}]
				\item $V$ is quasinormal;
				\item $V$ is absolute-$(p,r)$-paranormal for some $p,r>0$;
				\item ${V^*}^2V^2=V^*V$.
				\item ${V^*}^2V^2\geq V^*V$.
			\end{enumerate}
		\end{theorem}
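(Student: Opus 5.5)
I would prove the cycle (i)$\Rightarrow$(ii)$\Rightarrow$(iv)$\Rightarrow$(iii)$\Rightarrow$(i), with $P:=V^*V$ the projection onto $\mathcal{N}(V)^\perp$ and $Q:=VV^*$ the projection onto $\mathcal{R}(V)$. Since $V$ is a partial isometry, $|V|=P$, so $|V|^\alpha=P$ for every $\alpha>0$. Likewise $|V^*|^\alpha=Q$, and the polar decomposition of $V$ is $V=V\cdot P$.

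For (i)$\Rightarrow$(ii), a quasinormal operator is hyponormal, hence paranormal by \eqref{eq:inclusion_chain}, i.e.\ absolute-$(1,1)$-paranormal. For (ii)$\Rightarrow$(iv), I would use Proposition~\ref{prop:def_U} with $U=V$ and $|T|^\alpha=P$. It gives $\|PVPx\|^r\ge\|Px\|^{p+r}$ for unit $x$. Since $VP=V$ and $\|Pz\|=\|Vz\|$, this reads $\|V^2x\|^r\ge\|Vx\|^{p+r}$. Taking $x\in\mathcal{N}(V)^\perp$ of norm one, we have $\|Vx\|=1$, so $\|V^2x\|\ge1$. On the other hand $\|V^2x\|\le1$, so $\|V^2x\|=1=\|Vx\|$ on unit vectors of $\mathcal{N}(V)^\perp$. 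For $x\in\mathcal{N}(V)$ both sides vanish. Hence, for arbitrary $x$, writing $x=Px+(I-P)x$ and using $V^2(I-P)=0$, we get $\|V^2x\|=\|V^2Px\|=\|Px\|=\|Vx\|$. This yields $\langle {V^*}^2V^2x,x\rangle=\langle V^*Vx,x\rangle$, which gives (iii) directly, and hence (iv).

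For (iv)$\Rightarrow$(iii), note that $\|V^2x\|\le\|Vx\|$ always holds, because $\|V(Vx)\|\le\|Vx\|$ for the contraction $V$. Thus ${V^*}^2V^2\le V^*V$ always, and combined with (iv) we get equality.

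The main step is (iii)$\Rightarrow$(i). From $\|V(Vx)\|=\|Vx\|$ for all $x$, $V$ is isometric on $\mathcal{R}(V)$. For a partial isometry, $\|Vy\|=\|y\|$ holds iff $y\in\mathcal{N}(V)^\perp$, since $\|Vy\|=\|Py\|$. Therefore $\mathcal{R}(V)\subseteq\mathcal{N}(V)^\perp$, i.e.\ $Q\le P$. This is exactly the inclusion in the discussion preceding the theorem: $\mathcal{N}(V)^\perp$ is invariant under $V$, so $PVP=VP$. Also $VP=V$, and $PV=V$ because $\mathcal{R}(V)\subseteq\mathcal{R}(P)$. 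Hence $V$ commutes with $P=V^*V$, so $VV^*V=V^*VV$, i.e.\ $V$ is quasinormal. The only delicate point is the norm-equality characterization of $\mathcal{N}(V)^\perp$, which follows from $\|y\|^2=\|Py\|^2+\|(I-P)y\|^2$.
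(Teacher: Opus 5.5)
Your proof is correct, and it takes a more elementary route than the paper at both substantive steps.

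\textbf{(ii)$\Rightarrow$(iii).} The paper first uses the monotonicity theorem to pass to absolute-$(s,s)$-paranormality with $s=\max\{p,r\}$, and deduces that $V$ is paranormal. It then invokes Ando's characterization ${V^*}^2V^2-2\lambda V^*V+\lambda^2 I\ge 0$, compresses it by $V^*V$, and sets $\lambda=1$. You instead evaluate the $(p,r)$-inequality only on unit vectors of $\mathcal{N}(V)^\perp$. There $\|Vx\|=1$, so the exponents $p,r$ play no role, and you need neither monotonicity nor Ando's theorem. Homogeneity and the splitting $x=Px+(I-P)x$ then give $\|V^2x\|=\|Vx\|$ for all $x$.

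\textbf{(iii)$\Rightarrow$(i).} The paper iterates to ${V^*}^nV^n=(V^*V)^n$ for all $n$ and appeals to Embry's criterion. You observe directly that (iii) makes $V$ isometric on $\mathcal{R}(V)$. Since $\|Vy\|=\|y\|$ characterizes $\mathcal{N}(V)^\perp$, this gives $\mathcal{R}(V)\subseteq\mathcal{N}(V)^\perp$, that is, $VV^*\le V^*V$. Hence $PV=V=VP$, and $V$ commutes with $V^*V$. This is exactly the hyponormal-partial-isometry argument the paper sketches before the theorem, so no external criterion is needed.

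\textbf{Comparison.} The paper's route makes explicit the intermediate fact that an absolute-$(p,r)$-paranormal partial isometry is paranormal, which ties the result to Furuta's and Ando's work. Your route is self-contained and shows that (iii) already forces $VV^*\le V^*V$. It also uses hypothesis (ii) only to get $\|V^2x\|\ge 1$ for unit $x\in\mathcal{N}(V)^\perp$. So it would apply unchanged to any condition with that consequence, for example $k$-paranormality.
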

		
		\begin{proof}
			$(i)\Rightarrow(ii)$: This implication is immediate.
			
			$(ii)\Rightarrow(iii)$: Assume that $V$ is absolute-$(p,r)$-paranormal for some $p,r>0$. Let $x\in\mathcal{H}$ be an arbitrary unit vector. By Theorem \ref{thm:monotonicity}, $V$ is absolute-$(s,s)$-paranormal, where $s=\max\{p,r\}$. Using Proposition \ref{prop:def_U}, this implies that
			\begin{equation}\label{eq:U_p_r}
				\| |V|^s x \|^{2s}\leq \| |V|^s U |V|^s x \|^s,
			\end{equation}
			where $V=U|V|$ is the polar decomposition of $V$. Since $V$ is a partial isometry, we have that $U=V$. Moreover, since $|V|=V^*V$ is an orthogonal projection, we have $|V|^s = |V|$ for all $s>0$. Thus, \eqref{eq:U_p_r} reduces to
			\begin{equation*}
				\||V|x\|^2\leq \| |V|V|V|x\|,
			\end{equation*}
			which is equivalent to
			\begin{equation*}
				\|Vx\|^2\leq \|V^2x\|.
			\end{equation*}
			Since $x$ was arbitrary, we conclude that  $V$ is a paranormal partial isometry. By \cite[Theorem 1]{Ando72}, we have
			\begin{equation}\label{eq:paranormal_V}
				0\leq {V^*}^2V^2-2\lambda V^*V+\lambda^2I
			\end{equation}
			for all $\lambda>0$. Multiplying \eqref{eq:paranormal_V} by $V^*V$ on both sides, and using the fact that $V^*V\leq I$ (implying ${V^*}^2V^2 \le V^*V$), we obtain
			\begin{align*}
				0 &\leq V^*V{V^*}^2V^2V^*V-2\lambda(V^*V)^3+\lambda^2(V^*V)^2\\
				&={V^*}^2V^2-2\lambda V^*V+\lambda^2V^*V\\
				&\leq (1-\lambda)^2V^*V,
			\end{align*}
			for all $\lambda>0$. In particular, by taking $\lambda=1$, we directly get ${V^*}^2V^2=V^*V$. 
			
			$(iii)\Rightarrow(i)$: Assume that ${V^*}^2V^2=V^*V$. Multiplying from the left by $V^*$ and from the right by $V$, we get
			\begin{equation*}
				{V^*}^3V^3=V^*({V^*}^2V^2)V=V^*(V^*V)V={V^*}^2V^2=V^*V.
			\end{equation*}
			Continuing this process, we obtain ${V^*}^nV^n=V^*V$ for all $n\in\mathbb{N}$. Since $V^*V$ is an orthogonal projection, it follows that
			\begin{equation*}
				{V^*}^nV^n=(V^*V)^n
			\end{equation*}
			for all $n\in\mathbb{N}$. By \cite[p. 63]{Embry73}, we conclude that $V$ is quasinormal.
			
			$(iii)\Leftrightarrow(iv)$: This follows from the fact that $V$ is a contraction.
		\end{proof}

		As expected, the condition of paranormality cannot be replaced by normaloidness in the previous theorem, as the following example demonstrates.
		
		\begin{example}\label{ex:non_quasi_pi}
			Let 
			\[
			V=\begin{bmatrix}
				1&0&0\\
				0&0&1\\
				0&0&0
			\end{bmatrix}.
			\] 
			Then, it is easy to verify that $\|V^n\|=1=\|V\|^n$ for each $n\in\mathbb{N}$, which implies that $V$ is normaloid. Furthermore, 
			\begin{equation*}
				V^*V=\begin{bmatrix}
					1&0&0\\
					0&0&0\\
					0&0&1
				\end{bmatrix}\quad \text{ and }\quad VV^*=\begin{bmatrix}
					1&0&0\\
					0&1&0\\
					0&0&0
				\end{bmatrix}.
			\end{equation*}
			Since $V^*V$ is an orthogonal projection, it follows that $V$ is a partial isometry. On the other hand, recall that in the finite-dimensional setting, the notions of normality and quasinormality coincide. Since $V^*V\neq VV^*$, the operator $V$  is not quasinormal.
		\end{example}
		
		\begin{remark}
			Example~\ref{ex:non_quasi_pi} also illustrates that a binormal partial isometry need not be quasinormal (normal).
		\end{remark}

		Before proceeding to our next characterization, we establish a result of independent interest concerning the kernel growth of absolute-$(p,r)$-paranormal operators. Recall that the \emph{ascent} of an operator $T \in \mathfrak{B}(\mathcal{H})$, denoted by $\operatorname{asc}(T)$, is the smallest positive integer $n$ such that $\mathcal{N}(T^n) = \mathcal{N}(T^{n+1})$.

		\begin{theorem}\label{thm:pq_asc}
			Let $T\in\mathfrak{B}(\mathcal{H})$ be an absolute-$(p,r)$-paranormal operator for some $p,r>0$. Then $\operatorname{asc}(T)=1$.
		\end{theorem}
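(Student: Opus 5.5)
It suffices to show $\mathcal{N}(T^2)\subseteq\mathcal{N}(T)$, since the reverse inclusion is automatic. Once this holds, induction gives $\mathcal{N}(T^{n+1})=\mathcal{N}(T^n)$ for all $n$: if $T^{n+1}x=0$, then $T^{n-1}x\in\mathcal{N}(T^2)=\mathcal{N}(T)$, so $T^nx=0$. Hence $\operatorname{asc}(T)=1$. The plan is to translate $T^2x=0$ into the polar-decomposition form of the definition (Proposition \ref{prop:def_U}) and read off $Tx=0$.

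First fix $x$ with $T^2x=0$. By Theorem \ref{thm:monotonicity} we may replace $(p,r)$ by $(s,s)$ with $s=\max\{p,r,1\}$, exactly as in the proof of Theorem \ref{thm:compact_square_root}. That proof yields inequality \eqref{eq:power_s_all_x}:
\[
\|Tx\|^{2s}\leq \|\,|T|^sU|T|^sx\|\,\|x\|^{2s-1}\quad\text{for all } x\in\mathcal{H}.
\]
Next, by \eqref{eq:trans_equiv}, which rests on Lemma \ref{lem:fundamendal_alpha}, we have $U|T|^sU|T|^s=|T^*|^{s-1}T^2|T|^{s-1}$.

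The key step, and the main obstacle, is that the right-hand side involves $|T|^{s-1}x$ rather than $x$. So we cannot directly conclude that $|T|^sU|T|^sx=0$ from $T^2x=0$. To get around this, I would instead use the identity $|T|^sU|T|^s=U^*U|T|^sU|T|^s$, which holds because $U^*U$ is the projection onto $\overline{\mathcal{R}(|T|)}$. Since $U$ is isometric on that range, this gives $\|\,|T|^sU|T|^sx\|=\|U|T|^sU|T|^sx\|$. Rewriting $U|T|^sU|T|^s=T|T|^{s-1}T|T|^{s-1}$, Lemma \ref{lem:fundamendal_alpha} moves the factors past $T$: $T|T|^{s-1}T=|T^*|^{s-1}T\cdot T$. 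Doing this only partially gives $T|T|^{s-1}T|T|^{s-1}=|T^*|^{s-1}T^2|T|^{s-1}$, which again has the wrong order.

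My first attempt to fix the order is to apply the inequality not to $x$ but to a vector in $\mathcal{N}(T^2)$ adapted to the ordering. Note that $\mathcal{N}(T^2)$ is invariant under $|T|^{\alpha}$-type operations only in special cases, so instead I would use the original form \eqref{eq:definition_star_all} with $y$ chosen so that $|T^*|^ry$ relates to $Tx$. Concretely, set $y=Tx$. Then $|T^*|^rTx=T|T|^rx$ by Lemma \ref{lem:fundamendal_alpha}, and $|T|^p|T^*|^rTx=|T|^pT|T|^rx$. Furthermore $\|\,|T|^pw\|$ vanishes iff $Tw=0$, and $T\cdot T|T|^rx=T^2|T|^rx$. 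This still has the wrong order.

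So the fallback is the cleanest route: work directly with $T^*$-ranges. Take $x\in\mathcal{N}(T^2)$ and set $z=Tx\in\mathcal{N}(T)\cap\overline{\mathcal{R}(T)}$. We show $z=0$ by proving $\mathcal{N}(T)\cap\overline{\mathcal{R}(T)}=\{0\}$. For $z$ in this intersection, $z=\lim |T^*|^r y_k$, since $\overline{\mathcal{R}(|T^*|^r)}=\overline{\mathcal{R}(T)}$. Applying \eqref{eq:definition_star_all} to $y_k$, the left side tends to $\|\,|T|^pz\|^r=0$ because $z\in\mathcal{N}(T)=\mathcal{N}(|T|^p)$. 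However, $\|y_k\|$ may blow up, and this is precisely the difficulty. I would handle it with the operator form \eqref{eq:lambda_char_pq}: compress it by the projection onto $\mathcal{N}(|T|)$ and use $|T|^{2p}=0$ there. This gives, for every $\lambda>0$,
\[
p\lambda^{p+r}\|w\|^2\geq (p+r)\lambda^p\|\,|T^*|^r w\|^2\quad\text{with } w \text{ suitably chosen}.
\]
Here $w$ is chosen as a vector with $|T^*|^rw$ close to $z$ and $|T|^p|T^*|^rw$ small. Letting $\lambda\to0$ then forces $|T^*|^rw\to0$, and hence $z=0$. Getting this limiting argument to work is the step I expect to require the most care.
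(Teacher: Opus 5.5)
There is a genuine gap, and the route you settle on cannot be completed. Your fallback replaces ascent one by the stronger claim $\mathcal{N}(T)\cap\overline{\mathcal{R}(T)}=\{0\}$, and the decisive limit is left open.

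The device you propose does not close it. For a fixed vector $w$, asking \eqref{eq:lambda_char_pq} at $w$ for all $\lambda>0$ is \emph{equivalent} to \eqref{eq:definition_star_all} at $w$: the minimum over $\lambda$ is attained at $\lambda^r=\||T^*|^rw\|^2/\|w\|^2$. So it gives no control of $\|w\|$. Concretely, when $|T|^p|T^*|^rw$ is small but nonzero you get $(p+r)\||T^*|^rw\|^2\le r\lambda^{-p}\||T|^p|T^*|^rw\|^2+p\lambda^{r}\|w\|^2$. Making both terms small for some $\lambda$ requires $\||T|^p|T^*|^rw\|^r\|w\|^p\to0$, which is exactly the missing information. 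Compressing by the projection onto $\mathcal{N}(|T|)$ does not remove $|T|^{2p}$ either, since it sits between two factors $|T^*|^r$. Worse, the stronger claim is false even for paranormal operators (example below). Ascent one only needs $\mathcal{N}(T)\cap\mathcal{R}(T)=\{0\}$.

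The paper avoids your ordering problem by writing $x=y+z$ with $z\in\mathcal{N}(T)$ and $y=\lim_n|T|^{s-1}y_n$. Then \eqref{eq:trans_equiv} gives $\||T|^sU|T|^sy_n\|\to0$, and the paper applies \eqref{eq:power_s} to $y_n$. But \eqref{eq:power_s} is stated for unit vectors; in general it reads $\||T|^sy_n\|^2\le\||T|^sU|T|^sy_n\|\,\|y_n\|$. The $y_n$ can be taken bounded only if $y\in\mathcal{R}(|T|^{s-1})$. So the paper's argument drops precisely the factor you were worried about, and the obstruction is real. The statement holds for $r\le1$: by Theorem~\ref{thm:monotonicity}, $T$ is absolute-$(p,1)$-paranormal, and $T^2x=0$ gives $|T|^pTx=0$, hence $Tx=0$. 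It fails in general.

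For a counterexample, let $\mathcal{H}=L^2[0,1]\oplus\mathcal{K}\oplus\mathbb{C}$ and let $M$ be multiplication by $b$ on $L^2[0,1]$. Let $v(b)=(2\sqrt{b})^{-1/2}$, a unit vector not in $\mathcal{R}(M)$, and $P_v$ the projection onto $\mathbb{C}v$. Take an isometry $S$ on $\mathcal{K}$ with $\dim\mathcal{R}(S)^\perp=\infty$ and a unitary $J$ from $\{v\}^\perp$ onto $\mathcal{R}(S)^\perp$. Put $U(f,k,c)=(0,\,J(I-P_v)f+Sk,\,\langle f,v\rangle)$ and $W=U(\tfrac15M\oplus I\oplus0)$.

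Then $\|W(f,k,c)\|^2=\tfrac1{25}\|bf\|^2+\|k\|^2$ and $\|W^2(f,k,c)\|^2=\tfrac1{25}\|(I-P_v)bf\|^2+\|k\|^2$. Since $\|M\|\le1$, $W$ is paranormal provided $\|(I-P_v)u\|\,\|M^{-1}u\|\ge\tfrac15\|u\|^2$ for $u\in\mathcal{R}(M)$. Write $u=\alpha v+w$ with $w\perp v$.
\begin{enumerate}
\item If $\|w\|\ge|\alpha|/2$, this is immediate.
\item Otherwise, take $z=\mathbf{1}_{[0,\epsilon)}v/\|\mathbf{1}_{[0,\epsilon)}v\|^2$ with $\epsilon=(2\|w\|/|\alpha|)^4$. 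Then $\|M^{-1}u\|\ge|\langle u,z\rangle|/\|Mz\|\ge\|w\|/\epsilon$, so $\|w\|\,\|M^{-1}u\|\ge|\alpha|^4/(16\|w\|^2)\ge|\alpha|^2/4>\tfrac15\|u\|^2$.
\end{enumerate}
Here $(0,0,1)\in\mathcal{N}(W)\cap\overline{\mathcal{R}(W)}$, which refutes your stronger claim.

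Finally, $T=U(\tfrac15M\oplus I\oplus0)^{1/5}$ is a polar decomposition with $U|T|^5=W$ paranormal. By Proposition~\ref{prop:def_U}, $T$ is absolute-$(5,5)$-paranormal. Yet $x=(5^{1/5}b^{-1/5}v,0,0)$ lies in $\mathcal{H}$, since $|b^{-1/5}v|^2=\tfrac12b^{-9/10}$, and it satisfies $Tx=(0,0,1)\neq0$ and $T^2x=0$.
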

		
		\begin{proof}
			It is sufficient to show that $\mathcal{N}(T^2)\subseteq\mathcal{N}(T)$. Let $s > \max\{p,r,1\}$ and let $x \in \mathcal{N}(T^2)$ with $\norm{x}=1$. Consider the orthogonal decomposition $\mathcal{H} = \overline{\mathcal{R}(|T|^{s-1})} \oplus \mathcal{N}(|T|^{s-1})$. We can write $x = y + z$, where $y = \lim_{n\to\infty} |T|^{s-1} y_n$ for some sequence $\{y_n\}_{n\in\mathbb{N}} \subset \mathcal{H}$, and $z \in \mathcal{N}(|T|^{s-1}) = \mathcal{N}(T)$.
			
			Since $z \in \mathcal{N}(T)$, we have $T^2 z = 0$, and thus
			\begin{equation*}
				0 = T^2x = T^2y =  \lim_{n\to\infty} T^2 |T|^{s-1} y_n.
			\end{equation*}
			Using the identity from \eqref{eq:trans_equiv},   we observe that
			\begin{equation*}
				\norm{|T|^s U|T|^s y_n} = \norm{|T^*|^{s-1} T^2 |T|^{s-1} y_n} \to 0 \quad \text{as } n \to \infty.
			\end{equation*}
			Next, by Theorem \ref{thm:monotonicity}, $T$ is absolute-$(s,s)$-paranormal. The defining inequality \eqref{eq:power_s} then implies
			\begin{equation*}
				\norm{|T|^s y_n}^2 \leq \norm{|T|^s U |T|^s y_n}
			\end{equation*}
			so $|T|^s y_n \to 0$ as $n \to \infty$.
			
			To conclude, notice that  
			\begin{equation*}
				|T|y = |T| \left( \lim_{n\to\infty} |T|^{s-1} y_n \right) =  \lim_{n\to\infty} |T|^s y_n = 0.
			\end{equation*}
			Thus, $y \in \mathcal{N}(|T|)$. However, by construction, $y \in \overline{\mathcal{R}(|T|^{s-1})} = \overline{\mathcal{R}(|T|)}$. Since $\overline{\mathcal{R}(|T|)} \cap \mathcal{N}(|T|) = \{0\}$, it follows that $y = 0$. Consequently, $x = z \in \mathcal{N}(T)$, which proves $\mathcal{N}(T^2) = \mathcal{N}(T)$, i.e.,  $\operatorname{asc}(T)=1$.
		\end{proof}
		
		\begin{theorem}\label{thm:pq_paranormal_root_iso}
			Let $T\in\mathfrak{B}(\mathcal{H})$. The following conditions are equivalent:
			\begin{enumerate}[label=\textit{(\roman*)}]
				\item $T$ is a quasinormal partial isometry;
				\item $T$ is an absolute-$(p,r)$-paranormal operator for some $p,r>0$ and $T^n$ is a partial isometry for some $n\in\mathbb{N}$.
			\end{enumerate} 
		\end{theorem}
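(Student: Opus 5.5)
The direction $(i)\Rightarrow(ii)$ is immediate. A quasinormal operator is absolute-$(p,r)$-paranormal for every $p,r>0$, by the inclusion chain, and we may take $n=1$. The work is in $(ii)\Rightarrow(i)$. My plan is to show that $T$ itself is a contraction with $T^*T$ a projection, and then invoke Theorem~\ref{thm:quasi_iso_char}. If $T=0$ there is nothing to prove, so assume $T\neq 0$.

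\textbf{Step 1: $\|T\|=1$ and $\|T^k\|=1$ for all $k$.} Suppose $T^n$ is a partial isometry. If $T^n=0$, then $T$ is nilpotent and normaloid (Theorem~\ref{thm:pr_normaloid}), hence $T=0$. Otherwise $\|T^n\|=1$. Since $T$ is normaloid, $\|T\|^n=r(T)^n=r(T^n)\le\|T^n\|=1$, and $\|T^n\|\le\|T\|^n$. Together these give $\|T\|=1$, so $T$ is a contraction.

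\textbf{Step 2: $\mathcal{N}(T^n)=\mathcal{N}(T)$ and $T^n$ is isometric on $\mathcal{N}(T)^\perp$.} By Theorem~\ref{thm:pq_asc}, $\operatorname{asc}(T)=1$, so $\mathcal{N}(T^k)=\mathcal{N}(T)$ for all $k$. Because $T^n$ is a partial isometry, $\|T^nx\|=\|x\|$ for $x\in\mathcal{N}(T^n)^\perp=\mathcal{N}(T)^\perp$. Since $T$ is a contraction, each intermediate inequality in
\[
\|x\|=\|T^nx\|\le\|T^{n-1}x\|\le\cdots\le\|Tx\|\le\|x\|
\]
must be an equality. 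In particular $\|Tx\|=\|x\|$ for all $x\in\mathcal{N}(T)^\perp$. Hence $T$ is isometric on $\mathcal{N}(T)^\perp$ and vanishes on $\mathcal{N}(T)$, so $T^*T$ is the orthogonal projection onto $\mathcal{N}(T)^\perp$. That is, $T$ is a partial isometry.

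\textbf{Step 3: conclude.} Now $T$ is a partial isometry that is absolute-$(p,r)$-paranormal, and Theorem~\ref{thm:quasi_iso_char} $(ii)\Rightarrow(i)$ gives quasinormality.

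The main obstacle is Step 2. The key point is that the kernel condition $\mathcal{N}(T^n)=\mathcal{N}(T)$ from Theorem~\ref{thm:pq_asc} makes the initial space of $T^n$ equal to $\mathcal{N}(T)^\perp$. Without it, isometry of $T^n$ on its initial space would not transfer to $T$. The contraction property from Step 1 is what lets the chain of norm inequalities collapse to equalities.
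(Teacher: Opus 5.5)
Your proof is correct and follows the paper's route: normaloidness makes $T$ a contraction, Theorem~\ref{thm:pq_asc} gives $\mathcal{N}(T^n)=\mathcal{N}(T)$, you deduce that $T$ is a partial isometry, and Theorem~\ref{thm:quasi_iso_char} finishes. The only difference is in the middle step, where the paper cites Gupta's Theorem A and you instead give a direct and valid argument via the collapsing chain $\|x\|=\|T^nx\|\le\cdots\le\|Tx\|\le\|x\|$ on $\mathcal{N}(T)^\perp$, which makes the proof self-contained. One small point on $(i)\Rightarrow(ii)$: the inclusion chain alone does not justify your claim "for every $p,r>0$", but you only need \emph{some} pair, and quasinormal $\Rightarrow$ paranormal, i.e.\ absolute-$(1,1)$-paranormal, already gives that.
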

		\begin{proof}
		$(i)\Rightarrow(ii)$: This follows from \eqref{eq:iso_plus_zero}.
		
		$(ii)\Rightarrow(i)$: Assume that $T$ is absolute-$(p,r)$-paranormal and $T^n$ is a partial isometry for some $n \in \mathbb{N}$. By Theorem~\ref{thm:pr_normaloid}, $T$ is normaloid, which implies
		\begin{equation*}
			\|T\|^n = \|T^n\| \leq 1,
		\end{equation*}
		showing that $T$ is a contraction. Furthermore, Theorem~\ref{thm:pq_asc} establishes that $\mathcal{N}(T) = \mathcal{N}(T^2)$. By \cite[Theorem A]{Gupta80} (cf. \cite[Theorem 4]{EzzahraouietAl18}), we deduce that $T$ is a partial isometry. Finally, Theorem \ref{thm:quasi_iso_char} yields the quasinormality of $T$.
		\end{proof}
		
		\begin{remark}
			The absolute-$(p,r)$-paranormality in Theorem \ref{thm:pq_paranormal_root_iso} can be replaced by $k$-paranormality, as both conditions effectively control both the norm and the kernel growth (ascent). This control is crucial; the normaloid property alone is insufficient to ensure that $T$ is a partial isometry when its power is. For instance, the matrix
			\begin{equation*}
				T = \begin{bmatrix} 1 & 0 & 0 \\ 0 & 0 & 1/2 \\ 0 & 0 & 0 \end{bmatrix}
			\end{equation*}
			is normaloid and $T^2$ is a partial isometry, yet $T$ is not, as 
			$$T^*T = \begin{bmatrix}
				
				1&0&0\\
				
				0&0&0\\
				
				0&0&\frac{1}{4}
				
			\end{bmatrix}$$
			 is not an orthogonal projection. 
			 
			\medskip 
			
			Similarly, neither binormality nor posinormality is sufficient. The $2 \times 2$ nilpotent matrix $T = \begin{bmatrix} 0 & 2 \\ 0 & 0 \end{bmatrix}$ is binormal and its square is a partial isometry, but $T$ is not. For the posinormal case, consider the matrix
			$
				T = \begin{bmatrix} 1 & 1 \\ 0 & -1 \end{bmatrix}.
			$
			Since $T$ is invertible, it is posinormal by \cite[Theorem 3.1]{Rhaly94}. While $T^2 = I$ is a partial isometry, $T$ itself is not, as $T^*T = \begin{bmatrix} 1 & 1 \\ 1 & 2 \end{bmatrix}$ is not a projection.
		\end{remark}

		\bigskip 
		\section*{Declarations} 	
		
		\vspace{0.5cm}
		

        \noindent{\bf{Funding}}\\
		This work has been supported by the Ministry of Science, Technological Development and Innovation of the Republic of Serbia [Grant Number: 451-03-34/2026-03/200102].	
        \vspace{0.5cm}
        
		\noindent{\bf{Availability of data and materials}}\\
		\noindent No data were used to support this study.
		\vspace{0.5cm}\\
		\noindent{\bf{Competing interests}}\\
		\noindent The authors declare that they have no competing interests.
		\vspace{0.5cm}
		
		\noindent{\bf{Author contribution}}\\
		\noindent
		The work was a collaborative effort of all authors, who contributed equally to writing the article. All authors have read and approved the final manuscript.
		
		\vspace{0.5cm}
		
		
		

\begin{thebibliography}{10} 
			\footnotesize
			
			\bibitem{Aluthge90}
			A. Aluthge. \emph{On $p$-hyponormal operators for $0<p<1$}, Integral Equations Operator Theory, \textbf{13} (1990), 307--315.
			
			\bibitem{Aluthge96}
			A. Aluthge. \emph{Some generalized theorems on p-hyponormal operators}, Integral Equations Operator Theory, \textbf{24} (1996), 497--501.
			
			\bibitem{Ando72}
			T. Ando. \emph{Operators with a norm condition}, Acta Sci. Math. (Szeged), \textbf{33} (1972), 169--172.
			
			\bibitem{Brown53}
			A. Brown. \emph{On a class of operators}, Proc. Amer. Math. Soc.. \textbf{4} (1953), 723--728. 
			
			\bibitem{Campbell72}
			S. L. Campbell. \emph{Linear operators for which $T^*T$ and $TT^*$ commute}, Proc. Am. Math. Soc. \textbf{34} (1972), 177--180.
			
			
			\bibitem{Campbell75}
			S. L. Campbell. \emph{Linear operators for which $T^*T$ and $TT^*$ commute II}, Pacific Journal of Mathematics, \textbf{53} (1980), 355--361.
			
			\bibitem{Campbell80}
			S. L. Campbell. \emph{Linear operators for which $T^*T$ and $TT^*$ commute III}, Pacific Journal of Mathematics, \textbf{91}(1) (1980), 39--45. 
			
			\bibitem{Conway91}
			J. B. Conway. \emph{The Theory of Subnormal Operators}, Math. Surveys Monographs, vol. 36. Amer. Math. Soc. Providence, (1991).
			
			\bibitem{DuggalKubrusly11}
			B. P. Duggal, C. S. Kubrusly. \emph{Quasi-similar $k$-paranormal operators}, Operators and Matrices, \textbf{5}(3) (2011), 417--423.
			
			\bibitem{Embry73}
			M. R. Embry. \emph{A generalization of the Halmos-Bram criterion for subnormality}, Acta. Sci. Math. (Szeged), \textbf{35} (1973), 61--64.
			
			
			\bibitem{EzzahraouietAl18}
			H. Ezzahraoui, M. Mbekhta, A. Salhi, E. H. Zerouali. \emph{A note on roots and powers of partial isometries}. Arch. Math., \textbf{110}  (2018), 251--259, \url{https://doi.org/10.1007/s00013-017-1116-2}.
			
			\bibitem{FujiiJungLee00}
			M. Fujii, D. Jung, S. H. Lee, Y. Lee, R. Nakamoto. \emph{Some classes of operators related to paranormal and $\log$-hyponormal operators}, Math Japonica \textbf{51}(3) (2000), 395--402. 
			
			\bibitem{Furuta78}
			T. Furuta. \emph{Partial isometries and similar operators}, Revue Roumaine Math. Pure et Appl.,
			\textbf{8} (1978), 1157--1166.
			
			\bibitem{Furuta01}	
			T. Furuta. \emph{Invitation to Linear Operators. From Matrices to Bounded Linear Operators on a Hilbert Space}, Taylor and Francis, London and New York, (2001).
			
			\bibitem{FurutaItoYamazaki98}
			T. Furuta, M. Ito, T. Yamazaki. \emph{A subclass of paranormal operators including class of log-hyponormal and several related
				classes}, Scientiae Mathematicae, \textbf{1} (1998), 389--403. 
				
			\bibitem{Gupta80}
			B. C. Gupta. \emph{A note on partial isometries  II}, Indian J. Pure Appl. Math., \textbf{11}
			(1980), 208--211.
			
			\bibitem{Halmos50}
			P. R. Halmos. \emph{Normal dilations and extensions of operators}, Summa Bras. Math., \textbf{2} (1950), 125--134. 
			
			\bibitem{Istratescu67}
			V. Istr\v a\cb{t}escu. \emph{On some hyponormal operators}, Pacific J. Math. \textbf{22} (1967), 413--417.
			
			\bibitem{IstratescuIstratescu67}
			I. Istr\v a\cb{t}escu, V. Istr\v a\cb{t}escu. \emph{On Some Classes of Operators. $I$}, Proc. Japan Acad., \textbf{43} (1967), 605--606.
			
			\bibitem{IstratescuSaitoYoshino66}
			V. Istr\v a\cb{t}escu, T. Saito, T. Yoshino. \emph{On a class of operators}, Tohoku Math. J., \textbf{18} (1966), 410--413. 
			
			\bibitem{Kubrusly03}
			C. S. Kubrusly. \emph{Hilbert Space Operators}, Birkh\" auser, Boston, (2003).
			
			\bibitem{KubruslyDuggal07}
			C. S. Kubrusly, B. P. Duggal. \emph{On posinormal operators},
			Adv. Math. Sci. Appl. \textbf{17} (2007), 131--148.
			
			\bibitem{KubruslyDuggal10}
			C. S. Kubrusly, B. P. Duggal. \emph{A note on $k$-paranormal operators}, Operators and Matrices, \textbf{4}(2) (2010), 213--223.
			
			
			\bibitem{KubruslyVieieraZanni16}
			C. S. Kubrusly, P. C. M. Vieira, J. Zanni.  \emph{Powers of posinormal operators},
			Operators and Matrices, \textbf{10}(1) (2016), 15--27. 
			
			\bibitem{Nielsen80}
			O. A. Nielsen. \emph{Direct Integral Theory}, CRC Press, (1980).
			
			\bibitem{McCarthy67}
			C. A. McCarthy, $C_p$, Israel J. Math., \textbf{5} (1967), 249--271.
			
			\bibitem{Rhaly94}
			H. C. Rhaly, Jr. \emph{Posinormal operators},
			J. Math. Soc. Japan \textbf{46}(4) (1994), 587--605. 
			
			\bibitem{Stampfli62}
			J. G. Stampfli. \emph{Hyponormal operators}, Pacific J. Math., \textbf{12} (1962), 1453--1458.
			
			\bibitem{Stankovic24}
			H. Stankovi\' c. \emph{Conditions implying self-adjointness and normality of operators}, Complex Analysis and Operator Theory, \textbf{18}(149) (2024), \url{https://doi.org/10.1007/s11785-024-01596-0}.
			
			\bibitem{Stankovic25}
			H. Stankovi\' c. \emph{On Fong-Tsui conjecture and binormality of operators}, Mathematica Slovaca, \textbf{75}(5) (2025), 1221--1228,
			\url{https://doi.org/10.1515/ms-2025-0088}.
			
			\bibitem{StankovicKubrusly25}
			H. Stankovi\'c, C. Kubrusly. \emph{On roots of normal operators and extensions of Ando's Theorem},  Annals of Functional Analysis, \textbf{16}(60) (2025),
			\url{https://doi.org/10.1007/s43034-025-00455-z}.
 
			\bibitem{Xia83}
			D. Xia. \emph{Spectral Theory of Hyponormal Operators}, Birkh\" auser Verlag, Boston, (1983).
			
			\bibitem{YamazakiYanagida00}
			T. Yamazaki, M. Yanagida. \emph{A further generalization of paranormal operators}, Scientiae Mathematicae,   \textbf{3}(1) (2000), 23--31.
			
			\bibitem{YamazakiYanagida03}
			T. Yamazaki, M. Yanagida. \emph{Relations between two operator inequalities and their applications to paranormal operators}, Acta Scientiarum Mathematicarum, \textbf{69} (2003), 377--389. 
			
		\end{thebibliography}
	\end{document}